\newtheorem{lemma}{Lemma}
\newtheorem{corollary}{Corollary}
\newtheorem*{obs*}{Observation}
\newtheorem{prop}{Proposition}
\newtheorem{definition}{Definition}
\newtheorem*{def*}{Definition}
\newtheorem*{lemma*}{Lemma}
\def \nmr {\begin{enumerate}}
	\def \enmr {\end{enumerate}}
\def \tmz {\begin{itemize}}
	\def \etmz {\end{itemize}}
\newcommand{\cG}{{\cal G}}
\newcommand{\cF}{{\cal F}}
\newcommand{\cB}{{\cal B}}
\newcommand{\cH}{{\cal H}}
\newtheorem{thm}{Theorem}[section]
\newtheorem*{obs*[thm]}{Observation}
\newtheorem*{thm*}{Theorem}
\newenvironment{unnumbered}[1]{\trivlist \item [\hskip \labelsep {\bf
		#1}]\ignorespaces\it}{\endtrivlist}
\def \nmr {\begin{enumerate}}
	\def \enmr {\end{enumerate}}
\def \tmz {\begin{itemize}}
	\def \etmz {\end{itemize}}
 \title{Bipartite graphs with close domination and $ k $-domination numbers}
  \author{G\" ulnaz Boruzanl{\i} Ekinci$\/^{a}$ \quad and \quad Csilla Bujt\'{a}s$\/^{b}$ \\\\
  	$^{a}$ \small Department of Mathematics, Ege University,
  	 Izmir, Turkey\\
  	 \small{\tt  gulnaz.boruzanli@ege.edu.tr}, \\
  	$^{b}$ \small Faculty of Mathematics and Physics, University of Ljubljana, Slovenia \\
  	  	  	 \small {\tt csilla.bujtas@fmf.uni-lj.si}
  }
\date{\empty}
\begin{document}
	
	\maketitle
  
   \begin{abstract}
  Let $ k $ be a positive integer and let $ G $ be a graph with vertex set $ V(G) $. A subset $ D \subseteq V(G)$ is a $ k $-dominating set if every vertex outside $ D $ is adjacent to at least $ k $ vertices in $ D $. The $ k $-domination number  $ \gamma_k(G) $ is the minimum cardinality of a $ k $-dominating set in $ G $. For any graph $ G $, we know that $ \gamma_k(G) \geq \gamma(G)+k-2$ where $ \Delta(G)\geq k\geq 2 $ and this bound is sharp for every $ k\geq 2 $. In this paper, we characterize bipartite graphs satisfying the equality for $ k\geq 3 $ and present a necessary and sufficient condition for a bipartite graph to satisfy the equality hereditarily when $ k=3 $. We also prove that the problem of deciding whether a graph satisfies the given equality is NP-hard in general.
  \end{abstract}
  \medskip\noindent
  \textbf{Keywords:} Domination number, $ k $-domination number, Hereditary property, Vertex-edge cover, TC-number, Computational complexity
 \medskip
 
 \noindent
  \textbf{AMS Math.\ Subj.\ Class.\ (2010)}: 05C69, 05C75, 68Q25

\maketitle

\section{Introduction}

Let $ G $ be an undirected simple graph $ G $, where $ V(G) $ and $ E(G) $ denote the set of vertices and the set of edges of $ G $, respectively. For two vertices $ u,v \in V(G) $, $ u $ and $ v $ are \textit{neighbors} if they are adjacent, that is, if there is an edge $ e=uv \in E(G) $. The \textit{open neighborhood} of a vertex $ v $ is the set $ N_G(v) = \{u \in V(G): uv \in E(G)\} $ and the \textit{degree} of $ v $ is given by the cardinality of $ N_G(v) $. Two vertices $ u,v \in V(G) $ are \textit{false twins} if $ N_G(u) = N_G(v) $. Let $ \delta(G) $ and $ \Delta(G) $ denote the minimum and the maximum degree of $ G $, respectively. For a subset $ S\subseteq V(G) $, let $ G[S] $ denote the subgraph induced by $S$. An edge $ e=uv \in E(G) $ is \textit{subdivided} when it is deleted from $ G$ and a new vertex $ x $ is added with two new edges $ xu $ and $ xv $. Here, the vertex $ x $ is a \textit{subdivision vertex}. Let $ [r] $ denote  the set of integers $ \{1,\dots,r\} $ throughout the paper.

A subset $ D\subseteq V(G) $ is \textit{dominating} in $ G $ if every vertex of $ V(G) \setminus D $ has at least one neighbor in $ D $. Similarly, a subset $ D \subseteq V(G) $  is \textit{$ k $-dominating} in $ G $ if every vertex of $ V(G)\setminus D $ has at least $ k $ neighbors in $ D $. The \textit{domination number} $ \gamma(G) $ and the \textit{$ k $-domination number} $\gamma_k(G) $ of $ G $ are the minimum cardinalities of a dominating and a $ k $-dominating set of $ G$, respectively.

We say that a connected graph $G$ is a \emph{$(\gamma, \gamma_k)$-graph} if $\gamma_k(G)=\gamma(G) +k-2$ and $\Delta(G)\ge k$.
A connected graph $G$ is \emph{$(\gamma, \gamma_k)$-perfect} if  $\delta(G) \ge k$ and  every connected induced subgraph $H$ of $G$ with $\delta(H) \ge k$ satisfies the equality $\gamma_k(H)=\gamma(H) + k-2$.

Hypergraphs are set systems  that are conceived as a natural generalization of graphs.  A \emph{hypergraph} $H = (V,E)$ contains a finite set $V$ of vertices  together with a collection $E$ of nonempty subsets of $V$, called \emph{hyperedges} or simply \emph{edges}. The number of vertices, that is $|V|$, is called the \emph{order} of $H$. Throughout this paper, we suppose that  $|e| \ge 2 $ holds  for every $e\in E$. The \emph{degree} of a vertex $v$ in $H$, denoted by $d_H(v)$, is the number of edges containing the vertex $v$. The hypergraph $H$ is $k$-\emph{uniform} if every edge contains exactly $k$ vertices. Thus, every (simple) graph is a $2$-uniform hypergraph. A hypergraph $H'=(V', E')$ is an \emph{induced subhypergraph} of $H=(V,E)$ if $V' \subseteq V$ and $E'$ contains all edges $e \in E$ satisfying $e \subseteq V'$. We also use the notation $H[V']$ for the subhypergraph induced by $V'$. Given a collection $\cF$ of $k$-uniform hypergraphs, we say that a $k$-uniform hypergraph $H$ is \emph{$\cF$-free} if no induced subhypergraph of $H$ is isomorphic to any hypergraph contained in $\cF$. The complete $k$-uniform hypergraph of order $n$ is the hypergraph ${\cal K}_n^k=(V,E)$, where $|V|=n\ge k$ and $E$ contains all $k$-element subsets of $V$.

A set $T \subseteq V$  is a \emph{transversal}
(or \emph{vertex cover}) in the hypergraph $H=(V,E)$ if $|e\cap T|\ge 1$ holds for every edge $e\in E$.  The complement $V\setminus T$ of a transversal $T$ is called a \emph{(weakly) independent} vertex set. The minimum cardinality of a transversal and the maximum cardinality of a weakly independent vertex set in $H$ are denoted by $\tau(H)$ and $\alpha_w(H)$, respectively. The minimum number of edges that together cover every vertex of $H$ is the \emph{edge cover number} of $H$ and denoted by $\rho(H)$. For instance, in a complete $k$-uniform hypergraph ${\cal K}_n^k$, any $k-1$ vertices form a maximum weakly independent vertex set and any $n-k+1$ vertices form a minimum transversal. In particular, we have $\tau({\cal K}_n^k)=n-k+1$, $\alpha_w({\cal K}_n^k)=k-1$, and $\rho({\cal K}_n^k) = \lceil\frac{n}{k}\rceil$.
%The minimum size of a transversal is called the \emph{transversal number} (or \emph{vertex cover number}) of $H$ and denoted by $\tau(H)$.

Two vertices $u$ and $v$ of $H$ are \emph{adjacent} if there is an edge $e$ of $H$ such that $\{x,y\}\subseteq e$. A hypergraph is \emph{connected} if for every two vertices $x$ and $y$ there exists a sequence $x=x_0,x_1, \dots, x_j=y$ such that $x_{i-1}$ and $x_i$ are adjacent vertices for every $i \in [j]$. The \emph{$2$-section graph} of $H=(V,E)$ is the graph $G$ on the same vertex set $V$ such that two vertices  form an edge in $G$ if and only if they are adjacent in $H$. The \emph{incidence graph} of the hypergraph $H=(V,E)$ is the bipartite graph $G'=(A \cup B, E')$ where the vertices in $A$ and $B$ represent the vertices and edges of $H$, respectively. Moreover, two vertices, $a\in A$ and $b\in B$ are adjacent in $G'$ if the vertex of $H$ that is represented by $a$ is contained in the hyperedge represented by $b$. Note that $H$ is a connected hypergraph, if and only if, its $2$-section graph is connected and, if and only if, its incidence graph is connected. 

\vspace{0.3cm}\noindent\textbf{Structure of the paper.} In Section~\ref{sec:2}, we first cite some previous results and prove some general lemmas. Then, for each $k \ge 2$, three graph classes  $\cB_k^*$, $\cB_k$ and  $\cG_k$ are defined such that every connected $(\gamma, \gamma_k)$-graph belongs to $\cG_k$. Moreover, in Section~\ref{sec:3}, we concentrate on the case of $k \ge 3$ and show that every connected bipartite  $(\gamma, \gamma_k)$-graph is contained in $\cB_k$. We also prove that a connected bipartite graph $ G \in \cB_k $ is a $(\gamma, \gamma_k)$-graph if and only if its $ \gamma_k $-simplified graph $ G^* \in \cB_k^* $ is a $(\gamma, \gamma_k)$-graph. Then, we present a characterization of  bipartite $(\gamma, \gamma_k)$-graphs in terms of the properties of the underlying hypergraph. In Section~\ref{sec:4}, we work on the hereditary version of the problem and give a characterization for all bipartite $(\gamma, \gamma_3)$-perfect graphs. Later, in Section~\ref{sec:5}, we prove that it can be decided in polynomial time whether a given bipartite graph is a $(\gamma, \gamma_k)$-graph, while the corresponding decision problem is NP-hard on $ \cG_k $.

\section{Preliminary Results on $ (\gamma, \gamma_k)$-graphs}
\label{sec:2}

Fink and Jacobson \cite{Fink85, Fink85-2} introduced  $ k $-domination in graphs as a generalization of the concept of domination. Motivated by this definition, related problems have been studied extensively by many researchers (see for example \cite{Bujtas2017, Caro1990-2, Caro1990, Favaron1988, Favaron2008, Hansberg2015, Hansberg2013, Shaheen2009}). For more details, we refer the reader to the books on domination by Haynes, Hedetniemi and Slater \cite{ DominationBook2, DominationBook1} and to the survey on $ k $-domination and $ k $-independence by Chellali \textit{et al.} \cite{ChellaliSurvey}. 

Fink and Jacobson \cite{Fink85} proved the following result on the relation between the domination number and the $ k $-domination number of $ G $. 

\begin{thm} \cite{Fink85}
	\label{thm:Fink}
	For any graph $ G $ with $ \Delta(G)\geq k\geq 2 $, $ \gamma_k(G) \geq \gamma(G)+k-2$.
\end{thm}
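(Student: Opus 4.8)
The plan is to start from a minimum $k$-dominating set $D$ of $G$ (so $|D|=\gamma_k(G)$) and to build from it a dominating set of size $|D|-(k-2)$; this immediately yields $\gamma(G)\le \gamma_k(G)-k+2$, which is the claimed inequality.

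First I would record the one place the hypothesis $\Delta(G)\ge k$ is needed: it forces $D\ne V(G)$. Indeed, if $v^{*}$ is a vertex of degree $\Delta(G)\ge k$, then $V(G)\setminus\{v^{*}\}$ is already a $k$-dominating set (the only vertex outside it, namely $v^{*}$, has at least $k$ neighbours, all inside the set), so $\gamma_k(G)\le |V(G)|-1$ and hence a minimum $k$-dominating set $D$ must omit at least one vertex. Fix $v\in V(G)\setminus D$. By the definition of $k$-domination, $v$ has at least $k$ neighbours in $D$; choose $k$ distinct ones and call them $x_1,\dots,x_k$.

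Next I would set $D'=\bigl(D\setminus\{x_1,\dots,x_{k-1}\}\bigr)\cup\{v\}$. Since $v\notin D$ while $x_1,\dots,x_{k-1}$ are $k-1\ge 1$ distinct vertices of $D$, we get $|D'|=|D|-(k-1)+1=\gamma_k(G)-k+2$. It remains to check that $D'$ is dominating. The deleted vertices $x_1,\dots,x_{k-1}$ are neighbours of $v\in D'$, hence dominated; the vertex $x_k$ and every other vertex of $D$ that was not deleted still lies in $D'$; and $v$ itself lies in $D'$. Finally, any vertex $u\in V(G)\setminus D$ with $u\ne v$ has at least $k$ neighbours in $D$, of which at most $k-1$ were deleted, so $u$ retains a neighbour in $D\setminus\{x_1,\dots,x_{k-1}\}\subseteq D'$. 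Thus $D'$ dominates $G$ and $\gamma(G)\le |D'|=\gamma_k(G)-k+2$.

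I do not expect a real obstacle here, as the argument is essentially a one-step local modification; the only subtlety is the claim that a minimum $k$-dominating set genuinely leaves a vertex uncovered, which is exactly what $\Delta(G)\ge k$ buys us (without it one may be forced to take $D=V(G)$, and then the conclusion can fail, e.g.\ on $K_2$ for large $k$). It is also worth noting the boundary case $k=2$: there the modification deletes a single vertex $x_1$ and inserts $v$, so $|D'|=|D|$ and one recovers $\gamma(G)\le\gamma_2(G)$, consistent with the general bound.
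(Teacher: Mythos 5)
Your proof is correct: the swap $D'=(D\setminus\{x_1,\dots,x_{k-1}\})\cup\{v\}$ is exactly the standard Fink--Jacobson argument, and your observation that $\Delta(G)\ge k$ guarantees $D\neq V(G)$ closes the only potential gap. The paper itself only cites this theorem without proof, but the same mechanism (deleting $k-1$ vertices from a $k$-dominating set still leaves every outside vertex dominated) is precisely what drives the paper's proof of Lemma~\ref{lem:1}, so your argument is fully in line with the techniques used there.
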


Although it is proved that the above inequality  is sharp for every $k\ge 2$, the characterization of graphs attaining the equality is still open, even for the small values of $ k $.  The corresponding characterization problem was studied in~\cite{Hansberg2015, Hansberg2016, Hansberg2008}. Recently, we considered a large class of graphs and gave a characterization for the members satisfying the equality $ \gamma_2(G) = \gamma(G)$. We also proved that it is NP-hard to decide whether this equality holds for a graph. Moreover, we gave a necessary and sufficient condition for a graph to satisfy $ \gamma_2(G) = \gamma(G)$ hereditarily \cite{manu}. Some
%********References: Please insert all references here on $(\gamma,\gamma_2)$-graphs.****
similar problems involving different domination-type graph and hypergraph invariants were considered for example in~\cite{Arumugam2013, Blidia2006, Dettlaff2016, Hartnell1995, Brause2016, Randerath1998}.
%****AND SOME FURTHER PAPERS*******

%\noindent Throughout this study, we only consider connected graphs with maximum degree at least $k$. 

\begin{lemma}
	\label{lem:1}
	Let $D$ be a minimum $ k $-dominating set of a graph $G$. If  $ \gamma_k(G)=\gamma(G) +k-2$, then $\gamma(G[D])\geq \gamma_k(G)-(k-2)$. 
\end{lemma}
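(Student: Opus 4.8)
The plan is to reduce everything to a single structural observation: if one deletes at most $k-1$ vertices from a $k$-dominating set, what remains is still a dominating set of the whole graph. Let $S$ be a minimum dominating set of $G[D]$, so $\gamma(G[D])=|S|$ and $|D\setminus S|=|D|-\gamma(G[D])$. Since $D$ is a \emph{minimum} $k$-dominating set and the equality holds, $|D|=\gamma_k(G)=\gamma(G)+k-2$. Thus the claimed inequality $\gamma(G[D])\ge \gamma_k(G)-(k-2)$ is equivalent to $|D\setminus S|\le k-2$, and I would prove this by contradiction: assume $|D\setminus S|\ge k-1$ and fix a set $W\subseteq D\setminus S$ with $|W|=k-1$; note $S\subseteq D\setminus W$.

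The main step is to verify that $D\setminus W$ is a dominating set of $G$. Every vertex of $D\setminus W$ is in the set. Every vertex $w\in W$ lies in $D\setminus S$, hence has a neighbor in $S\subseteq D\setminus W$ because $S$ dominates $G[D]$. Finally, every vertex $v\in V(G)\setminus D$ has at least $k$ neighbors in $D$ (as $D$ is $k$-dominating), and at most $|W|=k-1$ of them lie in $W$, so $v$ has a neighbor in $D\setminus W$. Hence $D\setminus W$ dominates $G$ and has cardinality $|D|-(k-1)=\gamma_k(G)-(k-1)=(\gamma(G)+k-2)-(k-1)=\gamma(G)-1$, contradicting that $\gamma(G)$ is the minimum cardinality of a dominating set of $G$. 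Therefore $|D\setminus S|\le k-2$, which gives $\gamma(G[D])=|S|=|D|-|D\setminus S|\ge |D|-(k-2)=\gamma_k(G)-(k-2)$.

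I do not expect a serious obstacle here; the only subtlety is recognizing the right quantity to bound ($|D\setminus S|$) and the role of each hypothesis. Minimality of $D$ together with the equality is exactly what forces $|D|-(k-1)=\gamma(G)-1$, making the deduced dominating set too small; without minimality the same set $D\setminus W$ would only have size $\ge\gamma(G)$ and no contradiction would arise. (One could also phrase the argument without contradiction, simply observing that any $S$ with $|D\setminus S|\ge k-1$ yields a dominating set of $G$ of size $\gamma_k(G)-(k-1)<\gamma(G)$, which is impossible.) It is worth noting that for $k=2$ the statement specializes to $\gamma(G[D])\ge\gamma(G)$ for a minimum $2$-dominating set $D$ with $\gamma_2(G)=\gamma(G)$.
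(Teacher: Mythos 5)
Your proof is correct and is essentially the paper's argument: both hinge on the observation that deleting $k-1$ vertices from the $k$-dominating set $D$ still leaves every outside vertex dominated, while a dominating set of $G[D]$ takes care of the deleted vertices, yielding a dominating set of $G$ of size $|D|-k+1=\gamma(G)-1$, a contradiction. The only cosmetic difference is that the paper uses the set $S$ itself (padded to size $|D|-k+1$) whereas you use the superset $D\setminus W$; the counting and the contradiction are identical.
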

\begin{proof}
	Suppose, to the contrary, that $\gamma(G[D])\leq \gamma_k(G)-k+1$. Let $ S $ be a dominating set of cardinality $|D|-k+1$ in $ G[D] $. We claim that $ S $ is also a dominating set in $ G $. Indeed, as we removed $k-1$ vertices from the $k$-dominating set $D$, every vertex in $ V(G)\setminus D $ is still dominated by at least one vertex of $ S $. Note that $S$ dominates all the vertices in $ D $ by the choice of $S$. Since $ \gamma(G) \leq |S| = \gamma_k(G) - k +1 $, we get a contradiction. Thus, $\gamma(G[D])\geq \gamma_k(G)-k+2$. 
\end{proof}
Since $\gamma(G) \leq |V(G)|-\Delta(G)$ holds for every graph $G$, the previous lemma directly implies the following result obtained by Hansberg (see Theorem 5 in \cite{Hansberg2015}).
\begin{lemma} \cite{Hansberg2015}
	\label{lem:AH}
	Let $D$ be a minimum $ k $-dominating set of a graph $G$. If  $ \gamma_k(G)=\gamma(G) +k-2$, then $\Delta(G[D])\leq k-2 $.
\end{lemma}
\medskip

For each $k\ge 2$, we define three graph classes which will have crucial role in our study.
\begin{definition}
\label{Def:B_k*}
	Given an arbitrary connected $ k $-uniform hypergraph $ F $ with vertex set $ V(F) = D = \{v_1,\dots, v_s\} $ and edge set $E(F)=\{e_1, \dots ,e_p\}$, we define the following graph classes.
	%a graph $ G $ belongs to the class  $\mathcal{G}(F)$ if $ G $ can be obtained from $ F $ by the following rules.
	\tmz
	\item[$ (i) $] Define a pair of vertices $ X_{i} = \{x_i^1, x_i^2\} $ for every hyperedge $ e_i \in E(F)$. Let $X=\bigcup_{i \in [p]}X_{i}$ and  let
	$$V(G_1)=X \cup V(F),  \quad E(G_1)=\{x_i^jv_\ell: \enskip v_\ell \in e_i, \enskip i \in [p], \enskip j \in [2], \enskip \ell \in [s]\}.$$
	The graph class $\mathcal{B}^*(F)$ contains only this graph $G_1$ which is also called the double incidence graph of $F$.
	%further, let $Y$ be an arbitrary (possibly empty) set of vertices, such that $D$, $Y$ and all the pairs $X_{i}$ are mutually disjoint sets of vertices. Define  $V(G)=D \cup X \cup Y$, where $X=\bigcup_{e_i\in E(F)}X_{i}$. 
	
	\item[$(ii)$] The class $\cB(F)$ contains a graph $G_2$ if it can be obtained from the double incidence graph $G_1$  in the following way. We keep all vertices and edges of $G_1$. For each edge $e_i \in E(F)$, we create some (maybe zero) false twins of the vertex $x_i^1$. Further, if $S \subseteq V(F)$ induces a complete subhypergraph in $F$, then we may supplement $G_1$ with some (maybe zero) new vertices which are adjacent to all vertices in $S$. We denote by $Y$ the set of the new vertices that is $Y=V(G_2)\setminus V(G_1)$. Putting it the other way around, we say that $G_1$ is the \emph{$\gamma_k$-simplified graph} of $G_2$.% and $F$ is the underlying hypergraph of $G$. 
	%The edges between $D$ and $X\cup Y$ are defined such that  $N_G(x_{i}^s)\cap D=e_i$ for every vertex $x_{i}^s\in X$, and the set $N_G(u)\cap D$ contains at least $ k $ vertices and induces a complete subgraph in $F$ for any $u\in Y$. 
	
	\item[$(iii)$] The class $\cG(F)$ contains $G_3$ if it can be obtained from a graph $G_2\in \cB(F)$ by supplementing it with some (maybe zero) new edges inside $ D $ and $X\cup Y$. These edges can be chosen arbitrarily, but each $X_{i}$ must remain independent. 
	\etmz
	For every $ k\geq 2 $, the graph classes $\cB^*_k$, $\cB_k$, and  $\cG_k$ contains those graphs $G$ for which there exists a $ k $-uniform hypergraph $F$ such that $G$ belongs to $\cB^*_k(F)$, $\cB_k(F)$, and  $\cG_k(F)$, respectively. We say that $ F $  is the underlying hypergraph of $ G $ if $ G \in \cG(F) $.
\end{definition}
 It is clear that each member of $\cB_k$ is bipartite and that $\cB_k^* \subseteq \cB_k \subseteq \cG_k$ holds for every $k \ge 2$. In particular, for each $k$-uniform hypergraph $F$, we have exactly one graph in $\cB_k^*$  having $F$ as its underlying hypergraph, but there are infinitely many such graphs in $\cB_k$.   Note that in \cite{manu}, where $ (\gamma,\gamma_2) $-graphs were studied, a graph class $ \cG $ was introduced that exactly corresponds to the class $ \cG_2 $ defined here. Moreover, by the results in \cite{Arumugam2013, Hartnell1995, Lingas2018}, the class $\cB_2$ is the collection of those connected graphs which satisfy $\tau(G)=\gamma(G)$ and $\delta(G) \ge 2$.

Hansberg proved the following lemma which directly implies that the class $\cG_k$ contains all $(\gamma, \gamma_k)$-graphs.  
\begin{lemma} \cite{Hansberg2015}
	\label{lem:AH2}
	Let $ G $ be a $(\gamma, \gamma_k)$-graph for an integer $k \ge 2$ and  suppose that $ D $ is a minimum $ k $-dominating set of $ G $. Then, for every $ k $ vertices $ v_1,v_2, \dots, v_k$ from $D$, if $ \bigcap_{i=1}^k N(v_i) \neq \emptyset $, then there is a nonadjacent pair $ x,y \in V\setminus D $ such that $ N_G(x) \cap D =  N_G(y)\cap D = \{v_1,v_2, \dots, v_k\}$.
\end{lemma}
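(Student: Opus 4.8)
The plan is to argue by contradiction. Suppose the conclusion fails for some choice of $k$ (distinct) vertices $v_1,\dots,v_k\in D$ with $C:=\bigcap_{i=1}^{k}N_G(v_i)\neq\emptyset$, and I will produce a dominating set of $G$ of size $\gamma(G)-1$, which is impossible. The first step is to locate the common neighbours: if some $z\in C$ lay in $D$, then $v_1,\dots,v_k$ would be $k$ distinct neighbours of $z$ inside $D$, contradicting $\Delta(G[D])\le k-2$ from Lemma~\ref{lem:AH}; hence $C\subseteq V\setminus D$. Write $A=\{v_1,\dots,v_k\}$ and set
\[
W=\{\,u\in V\setminus D:\ N_G(u)\cap D=A\,\}.
\]
Since $D$ is $k$-dominating, every $u\in V\setminus D$ satisfies $|N_G(u)\cap D|\ge k=|A|$, so a vertex of $V\setminus D$ lies in $W$ precisely when $N_G(u)\cap D\subseteq A$; note also $\emptyset\neq C\supseteq W$ (each $w\in C$ has $A\subseteq N_G(w)\cap D$, possibly properly). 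The failure of the conclusion says exactly that $W$ induces a clique, a set of size at most $1$ being vacuously a clique — otherwise two distinct non-adjacent vertices of $W$ would be the pair $x,y$ we want.

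Next I choose the vertex to swap in: if $W\neq\emptyset$ take $w^{*}\in W$, and otherwise take $w^{*}\in C$ (nonempty). In either case $w^{*}\in V\setminus D$, hence $w^{*}\notin A$, and $w^{*}\sim v_i$ for all $i$. Put $S=(D\setminus A)\cup\{w^{*}\}$, so that
\[
|S|=|D|-k+1=\gamma_k(G)-k+1=\gamma(G)-1,
\]
using the hypothesis $\gamma_k(G)=\gamma(G)+k-2$. It remains to check that $S$ dominates $G$: the vertices of $D\setminus A$ lie in $S$; each $v_i\in A$ is dominated by $w^{*}$; and for $u\in V\setminus D$ with $u\neq w^{*}$, either $u$ has a neighbour in $D\setminus A\subseteq S$, or $N_G(u)\cap D\subseteq A$, which forces $u\in W$, whence $W\neq\emptyset$, $w^{*}\in W$, and — since $W$ is a clique — $u\sim w^{*}\in S$. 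Thus $\gamma(G)\le|S|=\gamma(G)-1$, a contradiction, and the lemma follows.

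I do not expect a genuine obstacle here; the whole argument is one vertex swap. The only points that require care are (a) invoking Lemma~\ref{lem:AH} to guarantee that $\bigcap_i N_G(v_i)$ meets $V\setminus D$, so that there is a legitimate vertex $w^{*}$ outside $D$ to add, and (b) phrasing the set $W$ using the $k$-domination bound $|N_G(u)\cap D|\ge k$, which is what turns "all $D$-neighbours of $u$ lie in $A$" into "$N_G(u)\cap D=A$", so that exactly the negated conclusion ("$W$ is a clique") is what is needed to dominate the leftover vertices after the swap.
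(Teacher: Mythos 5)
Your proof is correct. The paper states this lemma only as a citation from Hansberg's work and supplies no proof of its own, but your argument is the standard one behind the result and is entirely in the spirit of the paper's proof of Lemma~\ref{lem:1}: swap out $\{v_1,\dots,v_k\}$ for a single common neighbour $w^{*}$, using Lemma~\ref{lem:AH} to place the common neighbourhood outside $D$ and the negated conclusion (that the set $W$ of vertices with $N_G(u)\cap D=\{v_1,\dots,v_k\}$ induces a clique) to dominate the leftover vertices, yielding a dominating set of size $\gamma(G)-1$, a contradiction.
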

\begin{corollary} \label{lem:2} If $ G $ is a connected $ (\gamma,\gamma_k) $-graph with $\Delta(G) \ge k \ge 2$ and $D$ is a minimum $k$-dominating set of $G$, then there exists a $k$-uniform underlying hypergraph on the vertex set $D$ such that $ G\in \cG_k(F)$.
	\end{corollary}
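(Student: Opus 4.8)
\emph{Overview.} The plan is to read the hypergraph $F$ directly off $G$ and the minimum $k$-dominating set $D$, and then to verify the three nested conditions of Definition~\ref{Def:B_k*} in turn: membership in $\cB^*_k(F)$, then in $\cB_k(F)$, then in $\cG_k(F)$.

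\emph{Construction of $F$ and of the bipartite skeleton.} Set $V(F):=D$ and declare a $k$-subset $e\subseteq D$ to be an edge of $F$ precisely when $\bigcap_{u\in e}N_G(u)\neq\emptyset$, equivalently when $e\subseteq N_G(x)$ for some $x\in V(G)\setminus D$. By Lemma~\ref{lem:AH} we have $\Delta(G[D])\le k-2$, so any common neighbour of $k$ vertices of $D$ must lie outside $D$. Since $D$ is $k$-dominating, each $x\in V(G)\setminus D$ has $|N_G(x)\cap D|\ge k$, and then every $k$-subset of $N_G(x)\cap D$ is an edge of $F$, i.e.\ $N_G(x)\cap D$ induces a complete $k$-uniform subhypergraph of $F$. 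For each edge $e_i$ of $F$, Lemma~\ref{lem:AH2} supplies a non-adjacent pair $x_i^1,x_i^2\in V(G)\setminus D$ with $N_G(x_i^1)\cap D=N_G(x_i^2)\cap D=e_i$; fixing such pairs exhibits the double incidence graph $G_1$ of $F$ (the unique member of $\cB^*_k(F)$) as a subgraph of $G$.

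\emph{Fitting the remaining vertices.} An $x\in V(G)\setminus D$ with $|N_G(x)\cap D|=k$ has $N_G(x)\cap D$ equal to a single edge $e_i$; if it is not one of the chosen $x_i^1,x_i^2$, declare it a false twin of $x_i^1$ (in the current bipartite skeleton it indeed has the same neighbourhood $e_i$). An $x$ with $|N_G(x)\cap D|>k$ has $N_G(x)\cap D$ inducing a complete subhypergraph of $F$, so add $x$ as one of the ``clique'' vertices attached to $N_G(x)\cap D$. The resulting graph $G_2$ has $V(G_2)=V(G)$ and $E(G_2)\subseteq E(G)$, and $G_2\in\cB_k(F)$; the edges of $G$ not present in $G_2$ are exactly those with both ends in $D$ together with those with both ends outside $D$. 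Re-inserting them is precisely an instance of part~$(iii)$ of Definition~\ref{Def:B_k*}, legitimate because each $X_i=\{x_i^1,x_i^2\}$ was chosen independent in $G$. Hence $G\in\cG_k(F)$ --- provided $F$ is connected.

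\emph{Connectedness of $F$: the main obstacle.} This is the only step that uses the connectivity of $G$ together with the equality, and I expect it to be the real work. Suppose $F$ had components $D_1,\dots,D_r$ with $r\ge 2$. Each $x\in V(G)\setminus D$ has $N_G(x)\cap D$ inside a single $D_j$ (it is nonempty and induces a connected subhypergraph); collecting these $x$'s into $O_j$ gives a partition $V(G)=\bigcup_j(D_j\cup O_j)$ whose only cross-part edges lie inside $D$ or inside $\bigcup_j O_j$. With $G_j:=G[D_j\cup O_j]$, the set $D_j$ is $k$-dominating in $G_j$, so $\gamma_k(G_j)\le|D_j|$; for the pieces with $O_j\neq\emptyset$ we have $\Delta(G_j)\ge k$, so Theorem~\ref{thm:Fink} gives $\gamma(G_j)\le|D_j|-(k-2)$, while $O_j=\emptyset$ forces $D_j$ to span no edge of $F$, hence $|D_j|=1$. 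Since a union of minimum dominating sets of the $G_j$ dominates $G$, writing $a$ for the number of indices with $O_j\neq\emptyset$ we get $\gamma(G)\le|D|-(k-2)a$; as $\gamma(G)=\gamma_k(G)-(k-2)=|D|-(k-2)$ and $a\ge1$ (an outer vertex exists because $\Delta(G)\ge k>\Delta(G[D])$), for $k\ge3$ this forces $a=1$, so $F$ is one large component together with $r-1\ge1$ isolated vertices. To eliminate an isolated vertex $v$ of $F$: here $N_G(v)\subseteq D$ and $1\le\deg_G(v)\le k-2$, and one checks that $D\setminus\{v\}$ is a minimum $k$-dominating set of $G-v$ with $\gamma_k(G-v)=|D|-1$ and $\Delta(G-v)\ge k$, so combining Theorem~\ref{thm:Fink} for $G-v$ with $\gamma(G-v)\ge\gamma(G)-1$ forces $G-v$ to satisfy the equality again. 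Then the identity $\gamma(H[D_H])=|D_H|-(k-2)$ --- valid for every graph $H$ meeting the equality with minimum $k$-dominating set $D_H$, a short consequence of Lemmas~\ref{lem:1} and~\ref{lem:AH} --- applied to $G$ and to $G-v$, together with the fact that $G[D\setminus\{v\}]$ is $G[D]$ with $v$ deleted, yields a contradiction. This elimination is transparent when $k=3$, where $G[D]$ is forced to be a single edge plus isolated vertices, but needs more care for larger $k$; the $k=2$ bookkeeping is handled in \cite{manu}. Everything outside this step is routine verification against Definition~\ref{Def:B_k*} and Lemma~\ref{lem:AH2}.
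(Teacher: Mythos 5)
Your first two paragraphs are, in substance, the paper's entire proof: the corollary is stated with no argument at all because, once $E(F)$ is defined as the set of $k$-subsets of $D$ having a common neighbour, Lemma~\ref{lem:AH} forces those common neighbours outside $D$, Lemma~\ref{lem:AH2} supplies the nonadjacent pairs $X_i$, every remaining vertex of $V(G)\setminus D$ is either a false twin of some $x_i^1$ or a clique vertex in $Y$, and the leftover edges of $G$ all lie inside $D$ or inside $X\cup Y$, which is exactly what part~$(iii)$ of Definition~\ref{Def:B_k*} permits. What the paper does \emph{not} claim here is that $F$ is connected: the proof of Theorem~\ref{thmPartite} explicitly opens a case ``If $F$ is not connected, \dots'', and only there --- for bipartite $G$ and $k\ge 3$ --- is disconnectedness excluded. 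For $k=2$ it genuinely cannot be excluded: the paper's own example of two copies of $K_{2,3}$ joined by a single edge is a connected $(\gamma,\gamma_2)$-graph whose underlying hypergraph consists of two disjoint edges. So the word ``connected'' in Definition~\ref{Def:B_k*} must be read as inessential for this corollary, and your identification of connectivity of $F$ as ``the real work'' points at the wrong place; that work belongs to Theorem~\ref{thmPartite}, not here.

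This matters because the connectivity excursion is also where your argument breaks. The reduction to one nontrivial component plus isolated vertices is fine for $k\ge 3$, but the elimination of an isolated vertex $v$ rests on the ``identity'' $\gamma(H[D_H])=|D_H|-(k-2)$, which is false: Lemma~\ref{lem:1} gives only the lower bound $\gamma(G[D])\ge |D|-(k-2)$, and there is no matching upper bound. Indeed, for $K_{3,3}$ with $k=3$ and $D$ a partite class, $G[D]$ is edgeless, so $\gamma(G[D])=3$ while $|D|-(k-2)=2$. Without that identity your final contradiction never materializes, and the $k=2$ case is simply deferred to \cite{manu}, where the conclusion you are after is in fact false. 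If you delete the last paragraph and the caveat ``provided $F$ is connected'', what remains is a correct and complete proof of the corollary in the form the paper actually uses it.
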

%\begin{proof}
	%By Lemma~\ref{lem:AH}, the set $\bigcap_{i=1}^k N(v_i) $ cannot contain vertices from $D$. We claim that for every vertex $ z \in \bigcap_{i=1}^k N(v_i) $, there is a vertex $ x  $ different from $z$ such that $N_G(x)\cap D = \{v_1,v_2,\dots, v_k\}$ and $xz \notin E(G)$. Indeed, otherwise  $ (D \setminus \{v_1,v_2,\dots, v_k\})\cup\{z\} $ would be a dominating set of $ G $, a contradiction. Hence, we have at least two non-adjacent vertices $x$ and $y$ with the property $ N_G(x) \cap D =  N_G(y)\cap D = \{v_1,v_2,\dots, v_k\}$.
%\end{proof}

%Now, assume that $ \gamma_k(G)=\gamma(G)+k-2 $ and a minimum $ k $-dominating set $D$ is given. We define the D-underlying hypergraph $F$ of $G$. Let $F$ be a $ k $-uniform hypergraph on the vertex set $ D $ and let $ k $ vertices form a (hyper)edge in $F$ if and only if they have a common neighbor in $G$. 

%For every edge $e_i=\{v_1, v_2, \dots, v_k\}$ of $F$, we define the set
%$$X_i=\{x\in V(G)\setminus D : N_G(x)\cap D= \{v_1, v_2, \dots, v_k\} \}$$
%in $G$. For every $e_i\in E(F)$, by Corollary~\ref{lem:2}, the set $X_i$ contains at least two independent vertices. Let us denote them by $x_i^1$ and $x_i^2$. We also define
%$ X= \bigcup_{e_i \in E(F)}X_i$ and $Y=V(G)\setminus (D\cup X)$. Note that for every $y\in Y$ we have $|N(y)\cap D|\ge k+1$ and the set $N(y)\cap D$ induces a complete $ k $-uniform subhypergraph in $F$ by the definition.

\section{Bipartite  ($ \gamma,\gamma_k $)-graphs} \label{sec:3}

In this section we characterize connected bipartite graphs satisfying $\Delta(G) \ge k$ and $\gamma_k(G)=\gamma(G)+k-2$, for every $k \ge 3$. Observe that  $\gamma_k (G) \ge \gamma(G)$ holds  for every graph without imposing conditions on the vertex degrees. Moreover, assuming $k \ge 3$, the equality $\gamma_k(G)=\gamma(G)$ is satisfied if and only if $G$ consists of isolated vertices. Consequently, if $\Delta(G) \ge k \ge 3$ and if $G$ is disconnected, then  $\gamma_k(G)=\gamma(G)+k-2$ holds if and only if $G$ contains exactly one component which is a $(\gamma, \gamma_k)$-graph and all the further components are isolated vertices. Therefore, it is enough to concentrate on the connected $(\gamma, \gamma_k)$-graphs.

In the first subsection, we prove that all connected bipartite $(\gamma, \gamma_k)$-graphs belong to $\cB_k$ and it is enough to consider the graphs from the subclass $\cB_k^*$ where the underlying hypergraph uniquely represents the graph $G$. In the second subsection, we introduce a hypergraph invariant called `vertex-edge cover number'\ or shortly `TC-number'\ and derive the characterization of connected bipartite $(\gamma, \gamma_k)$-graphs via the properties of the underlying hypergraph.
\medskip

\subsection{Reducing the problem to $\gamma_k$-simplified graphs}
\begin{thm}
	\label{thmPartite}
	Let $k \ge 3$ and let $G$ be a connected bipartite graph that satisfies $\gamma_k(G)=\gamma(G)+k-2$ and $\Delta(G) \ge k$. Then, $G \in \cB_k$ and every minimum $ k $-dominating set corresponds to a partite class of $G$. 
\end{thm}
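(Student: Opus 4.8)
The plan is to combine the structural consequences of the equality $\gamma_k(G)=\gamma(G)+k-2$ with the bipartiteness of $G$. Fix a minimum $k$-dominating set $D$ and a bipartition $V(G)=A\cup B$. The first goal is to show that $D$ is one of the partite classes, say $D=A$. For this I would use Lemma~1 together with Lemma~3 (Hansberg's $\Delta(G[D])\le k-2$): since $k\ge 3$, the induced subgraph $G[D]$ has maximum degree at most $k-2$, but more importantly I want to argue $G[D]$ has \emph{no} edges, i.e. $D$ is independent. Suppose $uv$ is an edge inside $D$. Because $G$ is bipartite, $u$ and $v$ lie in different partite classes. The idea is to derive a contradiction with minimality of $|D|$ or with $\gamma(G[D])\ge \gamma_k(G)-(k-2)$ from Lemma~1: an edge inside $D$ "helps" domination of $G[D]$ too cheaply. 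More precisely, I expect that one can show that if $D$ is a minimum $k$-dominating set in a bipartite $(\gamma,\gamma_k)$-graph then $D$ must be independent (otherwise one could, exploiting the bipartition, exhibit a dominating set of $G$ of size $< \gamma_k(G)-(k-2)$, contradicting Lemma~1), and an independent $D$ that is also $k$-dominating in a connected bipartite graph with $\Delta\ge k$ is forced to be an entire partite class — every vertex outside $D$ has $\ge k\ge 1$ neighbors in $D$, so $V(G)\setminus D$ sees only $D$, hence $V(G)\setminus D\subseteq$ the opposite class, and connectivity plus the fact that $D$ is independent forces equality $D=A$.

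With $D=A$ established, I would then build the underlying $k$-uniform hypergraph $F$ on vertex set $D$: by Corollary~1 (which invokes Lemma~4), there is a $k$-uniform hypergraph $F$ on $D$ with $G\in\cG_k(F)$; the edges of $F$ are exactly those $k$-subsets $\{v_1,\dots,v_k\}\subseteq D$ that occur as $N_G(x)\cap D$ for some $x\notin D$. The remaining task is to upgrade membership in $\cG_k(F)$ to membership in $\cB_k(F)$, i.e. to rule out the "extra edges inside $D$ and inside $X\cup Y$" that distinguish $\cG_k$ from $\cB_k$ (item $(iii)$ of Definition~1). But this is immediate from bipartiteness with parts $D$ and $V(G)\setminus D$: there are no edges inside $D$ and no edges inside $V(G)\setminus D=X\cup Y$, so $G$ is obtained from the double incidence graph with only the false-twin and complete-subhypergraph additions of item $(ii)$, which is exactly the definition of $\cB(F)$. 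Hence $G\in\cB_k$. I would also need to double-check that $F$ is connected (so that the construction in Definition~1 applies): this follows because $G$ is connected and, with $D$ a partite class, any path in $G$ between two vertices of $D$ alternates through vertices of $V(G)\setminus D$, each of which lies in one common hyperedge of $F$, yielding a walk in $F$; and the maximum-degree-$\ge k$ hypothesis guarantees $F$ has at least one edge.

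The main obstacle I anticipate is the first step: proving rigorously that a minimum $k$-dominating set $D$ in a bipartite $(\gamma,\gamma_k)$-graph must be independent, and then that it must be a full partite class rather than a proper subset of one. The independence part is where Lemma~1 has to be used with care — one must turn an edge inside $D$, together with the bipartite structure, into a dominating set of $G$ that is too small, and it is conceivable that a single edge is not enough and one needs to handle a whole matching or a connected piece of $G[D]$ at once (removing $k-1$ well-chosen vertices while keeping domination). The "full partite class" part should then be comparatively easy: once $D$ is independent and $k$-dominating with $k\ge 2$, every vertex of $V(G)\setminus D$ has all its neighbors in $D$, so $V(G)\setminus D$ is also independent; the two independent sets $D$ and $V(G)\setminus D$ partition $V(G)$, and since $G$ is connected this is \emph{the} bipartition, giving $D\in\{A,B\}$. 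Everything after that is bookkeeping against Definition~1.
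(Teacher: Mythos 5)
Your proposal is a plan rather than a proof, and the two places where you defer the work are exactly where the substance of the theorem lives. First, you never actually produce the contradiction from an edge $uv$ inside $D$; you write that you ``expect'' a too-small dominating set can be exhibited. The paper's mechanism is concrete and different from what you sketch: since two vertices of $D$ lying in a common hyperedge of $F$ have a common neighbor, bipartiteness already puts each connected component of $F$ inside one partite class, so an edge of $G[D]$ can only join different components of $F$; for such an edge $uv$ with $u$ in a nontrivial component one takes a hyperedge $e_i\ni u$ and replaces the $k$ vertices of $(e_i\setminus\{u\})\cup\{v\}$ by the single vertex $x_i^1$, giving a dominating set of size less than $|D|-k+2$. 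Without some such explicit exchange argument your independence claim is unsupported.

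Second, and more seriously, your deduction ``once $D$ is independent and $k$-dominating, every vertex of $V(G)\setminus D$ has all its neighbors in $D$, so $V(G)\setminus D$ is independent and $D$ is a partite class'' is false: a vertex outside $D$ has at least $k$ neighbors in $D$ but may well have additional neighbors outside $D$ (the class $\cG_k$ explicitly allows edges inside $X\cup Y$). One can have $D=A_1\cup B_1$ independent with $A_1\subsetneq A$, $B_1\subsetneq B$, connectivity supplied by edges between $A\setminus A_1$ and $B\setminus B_1$. Eliminating this configuration is the paper's Case 2: an edge $xy$ in $G[V(G)\setminus D]$ with $x,y$ attached to different components of $F$ lets one replace $2k-2$ vertices of $D$ by $\{x,y\}$, which is a saving only because $|D|-(2k-2)+2<|D|-k+2$ requires $k\ge 3$. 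Your argument never invokes $k\ge 3$ in the structural part, yet the statement fails for $k=2$ (the paper's two-$K_{2,3}$ example), so the plan as written cannot close without this missing counting step. The final bookkeeping from ``$D$ is a partite class'' to $G\in\cB_k$ via Corollary~\ref{lem:2} is fine.
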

\begin{proof}
	Let $D$ be a minimum $ k $-dominating set of $G$. The degree condition implies that $V(G)\setminus D$ is not empty and, therefore, the underlying hypergraph $F$ contains at least one edge.  Let $e_i=\{v_1, \dots, v_k\}$ be an edge of $F$. Since all the vertices in $ e_i $ have common neighbors in $X_i$, they belong to the same partite class of $G$. This implies that the vertices of a connected component of $F$ are in the same partite class. If $F$ is connected, the same is true for the entire $D$. Moreover, since every vertex $x\in V(G)\setminus D$ has some neighbors in $D$, they all must be contained in the other partite class. It finishes the proof for the case when $F$ is connected.
	
	If $F$ is not connected, consider a nontrivial component $F_1$ and the remaining part $F_2=F-F_1$ of the underlying hypergraph. By Corollary~\ref{lem:2}, no vertex of $V(G)\setminus D$ can be adjacent to a vertex from $V(F_1)$ and to a vertex from $V(F_2)$ simultaneously. By our assumption, $G$ is connected. It is enough to consider the following two cases.
		
	\vspace{0.3cm}\noindent\textbf{Case 1.} There exists an edge $uv$ in $G[D]$ such that $u\in V(F_1)$ and $v\in V(F_2)$. 
	
	\noindent Since $F_1$ is connected and nontrivial, there is an edge $e_i \in E(F_1)$ containing $ u $. In this case, the $ k $ vertices from $ e_i \setminus \{u\} \cup \{v\} $ cannot have a common neighbor in $V(G)\setminus D$ and consequently, \[D'=D\setminus (e_i \setminus \{u\} \cup \{v\}) \cup \{x_i^1\},\]
	
	\noindent where $ x_i^1$  is from the pair $ X_i $ associated with $ e_i $. Observe that $ D' $ is a dominating set of $G$. As $|D'| <|D|-k+2$, this is a contradiction.

	\vspace{0.3cm}\noindent\textbf{Case 2.} There exists an edge $xy$ in $G[V(G)\setminus D]$ such that $x$ has neighbors from $V(F_1)$ and $y$ has neighbors from $V(F_2)$. 
	
	\noindent Consider $ k $ vertices, namely $v_1,\dots, v_k$ from $N(x)\cap V(F_1)$ and $ k $ vertices, namely $u_1,\dots,u_k$ from $N(y)\cap V(F_2)$ and then define $D'= D\setminus \{v_1,\dots, v_{k-1},u_1,\dots, \allowbreak u_{k-1}\}\cup \{x,y\}$. Observe that $D'$ is a dominating set of $G$, since $\{v_1,\dots, v_{k-1},u_1,\allowbreak \dots, u_{k-1}\}\cup \{x,y\}$ does not contain any edges of $F$. By our condition $ k\geq 3 $, it follows that  $|D'|= |D| - (2k-2) + 2 <|D|-k+2$. We have a contradiction again which completes the proof of the theorem.
	
\end{proof}

Note that Theorem \ref{thmPartite} does not hold for $ k=2 $. As an example, consider the graph $ H $ constructed by two vertex-disjoint copies of $ K_{2,3} $ by adding exactly one edge between them such that the maximum degree remains three. Observe that this graph is bipartite and satisfies the equality $ \gamma_2 (H) = \gamma(H) = 4$, but neither of the partite classes corresponds to a minimum dominating set or minimum $ 2 $-dominating set.

\begin{thm}
	\label{thm:simpl}
	Let $ G $ be a connected bipartite graph from $\cB_k $ and let $G^*$ be its $\gamma_k$-simplified graph from $\cB_k^*$, where $ k\geq 3 $. Then $\gamma_k(G)=\gamma(G)+k-2$ if and only if  $\gamma_k(G^*)=\gamma(G^*)+k-2$.
\end{thm}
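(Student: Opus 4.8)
The plan is to show that the two quantities $\gamma_k - \gamma$ are simultaneously equal to $k-2$ by analysing how the extra vertices $Y$ and the false twins relate to $k$-dominating and dominating sets. Recall from Definition~\ref{Def:B_k*} that $G$ is obtained from $G^*$ by (a) adding false twins of the vertices $x_i^1$ for $i\in[p]$, and (b) adding vertices of $Y$, each joined to all vertices of some complete subhypergraph $S\subseteq V(F)$ of $F$. Since false twins have degree at least $k$ (they are joined to the whole edge $e_i$) and each new vertex $y\in Y$ has $N_G(y)\cap D \supseteq S$ with $|S|\ge k$ whenever $y$ has positive degree, the set $D=V(F)$ remains a $k$-dominating set of $G$; moreover $\Delta(G)\ge k$ still holds. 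By Theorem~\ref{thmPartite} applied to $G$ (once we know $G$ is a $(\gamma,\gamma_k)$-graph) the partite class $D$ is a minimum $k$-dominating set, and the same holds for $G^*$.

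The first direction is the easier one. Suppose $\gamma_k(G^*) = \gamma(G^*) + k-2$. I would argue $\gamma_k(G) \le \gamma(G) + k - 2$ as follows: take a minimum dominating set $D_G$ of $G$; I claim one can modify it into a dominating set that avoids $X\cup Y$, of size at most $\gamma(G)$. Indeed, any $x_i^j$ or $y$ in $D_G$ can be exchanged for a vertex of $D$ in its neighbourhood, using the structure of $F$; this shows $\gamma(G) \ge \gamma(G^*)$ (in fact one should check equality $\gamma(G)=\gamma(G^*)$ holds, since $G^*$ is an induced subgraph and the deleted vertices $Y$ and the twins are dominated from inside $D$). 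Combined with $\gamma_k(G) \le |D| = \gamma_k(G^*) = \gamma(G^*)+k-2 = \gamma(G)+k-2$ and the reverse inequality of Theorem~\ref{thm:Fink}, we get equality for $G$.

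For the converse, suppose $\gamma_k(G)=\gamma(G)+k-2$. Again $\gamma(G)=\gamma(G^*)$ should be established first: "$\le$" is clear since every dominating set of $G^*$ together with nothing extra dominates $Y$ and the twins in $G$ (each such vertex sees all of some edge $e_i$, and any transversal-type argument puts a dominator there); "$\ge$" needs that a minimum dominating set of $G$ can be pushed off $X\cup Y$ and into $V(G^*)$ without increasing size. Then, since $D=V(F)$ is a minimum $k$-dominating set of both graphs and has the same size $|V(F)|$ in each, we get $\gamma_k(G^*) \le |V(F)| = \gamma_k(G) = \gamma(G)+k-2 = \gamma(G^*)+k-2$, and Theorem~\ref{thm:Fink} gives the matching lower bound. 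One subtlety: I must verify that $V(F)$ really is a \emph{minimum} $k$-dominating set of $G$, i.e. that $\gamma_k(G)=|V(F)|$; this is exactly Theorem~\ref{thmPartite} together with the hypothesis, since the partite class containing $V(F)$ is forced to be a minimum $k$-dominating set and by construction that class is precisely $V(F)$ (the vertices of $X\cup Y$ all lie in the opposite class).

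The main obstacle I anticipate is the bookkeeping in the claim $\gamma(G)=\gamma(G^*)$ — specifically the "pushing off $X\cup Y$" step. For the false twins this is routine (replace a chosen twin $x_i^{j}$ by any $v_\ell\in e_i$, which dominates all of $X_i$ and all other twins of $X_i$), but for $y\in Y$ adjacent to a complete subhypergraph $S$ one must check that swapping $y$ for a suitable vertex of $S$ does not lose domination of other $Y$-vertices that were relying on $y$; here one uses that any two $Y$-vertices attached to overlapping complete sets can be handled together, or more cleanly, that a dominating set of $G$ restricted-and-repaired yields a dominating set of $G^*$ of no larger size because $G^*$ is obtained by deleting vertices all of whose neighbourhoods lie in $D=V(F)$. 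I would isolate this as a short lemma: \emph{if $G\in\cB_k(F)$ with $\gamma_k$-simplified graph $G^*$, then $\gamma(G)=\gamma(G^*)$}, prove it once, and then the theorem follows by the two three-line inequality chains above.
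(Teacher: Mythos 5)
Your overall architecture coincides with the paper's: both reduce the theorem to the two identities $\gamma_k(G)=\gamma_k(G^*)$ and $\gamma(G)=\gamma(G^*)$, after which the equivalence is a three-line chain. However, the lemma you correctly isolate as the crux, $\gamma(G)=\gamma(G^*)$, is not actually proved, and the repair moves you sketch for it would fail. First, you cannot push a minimum dominating set off all of $X\cup Y$: exchanging $x_i^j$ for a vertex $v\in e_i$ destroys the domination of the other $k-1$ vertices of $e_i$, since they lie in the same partite class as $v$ and hence are not adjacent to it. The correct target is a minimum dominating set $Q^*\subseteq D\cup X$; one only eliminates the $Y$-vertices and normalizes the false twins to $x_i^1$. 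Second, your ``clean'' fallback --- that deleting vertices whose whole neighbourhood lies in $D$ cannot change $\gamma$ --- is not a valid principle (a minimum dominating set of $G^*$ containing both $x_i^1$ and $x_i^2$ and no vertex of $e_i$ would leave a $Y$-vertex attached to $e_i$ undominated in $G$). The actual argument needs a normalization (choose the dominating set with $|Q\cap D|$ maximum), the observation that $Q\cap D$ is then a transversal of the underlying hypergraph $F$, and, to remove a vertex $y\in Y$, a private-neighbour analysis: the $D$-vertices privately dominated by $y$ number at most $k-1$ (otherwise, by Corollary~\ref{lem:2} and the fact that $N_G(y)$ induces a complete subhypergraph, they would contain an edge of $F$, contradicting transversality), so they lie in a common edge $e_i\subseteq N_G(y)$ and $y$ can be replaced by $x_i^1$, which also keeps $y$ itself dominated via the dominator of $x_i^2$. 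None of this machinery appears in your sketch, and it is exactly where the hypotheses $k\ge 3$ and the complete-subhypergraph structure of the $Y$-attachments enter.

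A smaller but genuine issue is your justification of $|D|=\gamma_k(G^*)$ (and $|D|=\gamma_k(G)$). Theorem~\ref{thmPartite} only says that every minimum $k$-dominating set \emph{is} a partite class; it does not tell you that the class $D$ is the minimum one. The paper instead proves $\gamma_k(G^*)\ge |D|$ directly by a degree count: every vertex of $X$ has degree exactly $k$ in $G^*$, so $k$-dominating $s$ vertices of $D$ from outside requires at least $s$ vertices of $X$. Some such argument is needed, since in $G$ the $Y$-vertices may have degree larger than $k$ and the counting is not automatic. In summary: right skeleton, but the load-bearing lemma is missing and the specific exchanges you propose to prove it do not work as stated.
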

\begin{proof}
	First, we prove that $\gamma_k(G)=\gamma_k(G^*)$. Since $D$ is supposed to be a minimum  $ k $-dominating set in $G$ and we did not delete any vertex from it when $G^*$ was obtained, $D$ is a $ k $-dominating set in $G^*$. Hence, $\gamma_k(G^*) \le |D|$. On the other hand, each vertex of $D$ can be $ k $-dominated in $ G^* $ by itself or by at least $ k $ neighbors from $X$. Since every vertex from $X$ is of degree $ k $, the $ k $-domination of all vertices in $D$ needs at least $|D|$ vertices in $G^*$. So, $\gamma_k(G^*)\ge |D|$ and $D$ is a minimum  $ k $-dominating set in $G^*$ as well. 
	
	Now, we prove that $\gamma(G)=\gamma(G^*)$. Let $Q'$ be a minimum dominating set in $G^*$ such that $|Q'\cap D|$ is maximum. Then, $Q'$ contains at most one vertex from each pair $\{x_i^1,x_i^2\}$, since otherwise $x_i^2$ can be replaced by a vertex of $e_i$. Since $x_i^1$ and $x_i^2$ are false twins, we may suppose that the vertex $x_i^2$ does not belong to $Q'$ for any $ i $. Note that $Q'\cap D$ is a transversal in the underlying hypergraph $F$. We claim that $Q'$ is a dominating set in $G$ as well. Indeed, all vertices in $D\cup X$ are dominated. Further, for any vertex $y\in Y$ we may consider $ k $ arbitrarily chosen neighbors from $D$, they must form a hyperedge $e_j$ in $F$. The vertex $x_j^2$ is not in $Q'$ but it is dominated by a vertex from $Q'\cap e_j$. This vertex dominates $y$ as well. This proves $\gamma(G) \leq |Q'| = \gamma(G^*)$.
	
	On the other hand, consider a minimum dominating set $Q$ in $G$ such that $|Q\cap D|$ is maximum. 
	Since every $X_i$ contains false twin vertices, we have $|Q\cap X_i|\leq 1$ for every $i$. If  $Q\cap X_i$  contains a vertex different from $x_i^1$, we may replace it with $x_i^1$ in the dominating set. Again, $Q\cap D$ must be a transversal in $F$. Now assume that $Q$ contains a vertex $y$ from $Y$. Let $z_1, \dots, z_\ell$ be those vertices from $D$ which are privately dominated by $y$ (they have no further neighbors in the dominating set). Note that all of $z_1, \dots, z_\ell$ are outside of $Q$. By Corollary~\ref{lem:2} and the definition of $F$, if $\ell \ge k$, then the set $\{z_1, \dots, z_\ell\}$ must contain at least one edge from $F$. This contradicts the fact that $Q$ is a transversal in $F$.
	If $\ell \le  k-1$, then there is an edge $e_i \in E(F)$ containing  $z_1, \dots, z_\ell$. As $x_i^1$ can dominate all these vertices  $z_1, \dots, z_\ell$ and the vertex which dominates $x_i^2$ in $Q$ also dominates $y$, the set $Q\setminus \{y\}\cup \{x_i^1\}$ is a dominating set of $G$. 
	
	Perform these changes while there is a vertex  from $Y$  in the dominating set. At the end, we have a minimum dominating set $Q^*\subseteq (D\cup X)$ in $G$. Clearly, $Q^*$ is a dominating set of $G^*$, and we have $\gamma(G^*)\leq  |Q^*| =\gamma(G)$.  This implies $\gamma(G)=\gamma(G^*)$ which completes the proof.	
\end{proof}

%[If we insert the conditions on the 'structure' into the theorem, we may also state the equalities $\gamma_3(G)=\gamma_(G^*)$ and $\gamma(G)=\gamma(G)$.]

\subsection{Characterization via Underlying Hypergraphs}

\begin{definition}
For a hypergraph $H$, a set $S \subseteq V(H) \cup E(H)$  is a \emph{vertex-edge cover} or shortly a \emph{TC-set} of $H$ if $S\cap V(H)$ is a transversal (vertex cover) in $H$ and the edges in $S\cap E(H)$ together cover all vertices outside $S\cap V(H)$. The smallest cardinality of a TC-set in $H$ is called the \emph{TC-number} of $H$ and denoted by $tc(H)$.
\end{definition}

\begin{prop} 
	\label{prop:tc}
	For every hypergraph $H$ of order $n$, the following statements hold and the upper bounds given in $(i)$ and $(ii)$ are sharp.
\begin{itemize}
	\item[$(i)$] If $r$ is the smallest size of an edge in $H$, then $tc(H) \le n-r+2$.
	\item[$(ii)$] If $H$ is $k$-uniform, then $tc(H) \le n-k+2$.
	\item[$(iii)$] If $H$ is $2$-uniform, that is a simple graph, then $tc(H)=n$.
\end{itemize}
\end{prop}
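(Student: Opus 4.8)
The plan is to handle the three items in order, using a single basic construction for the upper bounds and then observing that equality is forced when every edge has size $2$. First I would prove $(i)$. Pick an edge $e$ of minimum size $r$, and fix one vertex $v \in e$. Set $S \cap V(H) = V(H) \setminus (e \setminus \{v\})$; this is a transversal, since every edge $f \ne e$ meets $V(H) \setminus e$ whenever $f \not\subseteq e$, and if $f \subseteq e$ then $|f| \ge r = |e|$ forces $f = e$... wait, that last claim needs care: $f \subseteq e$ with $|f| \le |e|$ does not force $f = e$ unless $|f| \ge |e|$, which holds because $r$ is the \emph{smallest} edge size. So every edge is either equal to $e$ or contains a vertex outside $e$; in both cases it is hit, because $e$ itself contains $v \in S \cap V(H)$. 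The vertices left uncovered by $S \cap V(H)$ are exactly $e \setminus \{v\}$, and the single edge $e$ covers all of them. Hence $S = (V(H) \setminus (e \setminus \{v\})) \cup \{e\}$ is a TC-set of size $(n - (r-1)) + 1 = n - r + 2$, giving $tc(H) \le n - r + 2$. Part $(ii)$ is the special case $r = k$, so $tc(H) \le n - k + 2$ follows immediately.

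For the sharpness claims in $(i)$ and $(ii)$ I would exhibit an explicit extremal family. A clean choice is the complete $k$-uniform (or, for $(i)$, $r$-uniform) hypergraph ${\cal K}_n^k$: the excerpt already records $\tau({\cal K}_n^k) = n - k + 1$, and I would argue that any TC-set $S$ satisfies $|S| \ge n - k + 2$. The point is that if $S$ uses no edges at all, then $S \cap V(H)$ must itself cover every vertex, i.e. $S \cap V(H) = V(H)$ and $|S| = n \ge n - k + 2$ (for $k \ge 2$); and if $S$ uses at least one edge $e$, then $S \cap V(H)$ must still be a transversal, so $|S \cap V(H)| \ge \tau({\cal K}_n^k) = n - k + 1$, whence $|S| \ge (n-k+1) + 1 = n - k + 2$. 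Combined with the upper bound this gives $tc({\cal K}_n^k) = n - k + 2$, establishing sharpness for every admissible $n$ and $k$ (and the analogous statement with $r$ in place of $k$).

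Finally, for $(iii)$ I would show that in a $2$-uniform hypergraph $H$ (a simple graph) every TC-set has size at least $n$, which together with $(ii)$ — or rather with the trivial bound $tc(H) \le n$ coming from taking $S = V(H)$ — yields $tc(H) = n$. Let $S$ be a TC-set and suppose $|S| < n$; then some vertex $w$ lies outside $S$, in particular outside $S \cap V(H)$, so $w$ must be covered by some edge $e \in S \cap E(H)$ with $w \in e$. Since $|e| = 2$, write $e = \{w, w'\}$. Now consider the edge $e$ itself as an edge of $H$: the transversal $S \cap V(H)$ must meet $e$, so $w' \in S \cap V(H)$; but then the \emph{other} endpoint perspective gives nothing new, so I instead count: each edge in $S \cap E(H)$ can be ``responsible'' for covering at most one vertex of $V(H) \setminus (S \cap V(H))$, because if $e = \{w_1, w_2\}$ with both $w_1, w_2 \notin S \cap V(H)$ then $S \cap V(H)$ fails to be a transversal (it misses $e$). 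Hence $|V(H) \setminus (S \cap V(H))| \le |S \cap E(H)|$, so $n = |S \cap V(H)| + |V(H) \setminus (S \cap V(H))| \le |S \cap V(H)| + |S \cap E(H)| = |S|$, giving $tc(H) \ge n$ as claimed.

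The routine parts are the upper-bound construction and the graph case; the step that needs the most care — and that I flag as the main obstacle — is the sharpness argument, specifically making sure the lower bound $|S| \ge n - k + 2$ genuinely holds for \emph{every} TC-set of ${\cal K}_n^k$, including the borderline configurations where $S$ mixes a few vertices with exactly one edge. The key mechanism is uniform: as soon as $S$ contains a single edge, the transversal requirement alone already costs $n - k + 1$ vertices, and the edge adds one more; and if $S$ contains no edge it costs all $n$ vertices. I would also remark in passing that $(iii)$ shows the bound in $(ii)$ is genuinely about $k \ge 3$ having ``room'', since for $k = 2$ one has $n - k + 2 = n$ anyway, consistent with $(iii)$.
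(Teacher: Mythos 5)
Your proposal is correct and follows essentially the same route as the paper: the same TC-set construction $(V(H)\setminus(e\setminus\{v\}))\cup\{e\}$ for the upper bounds, the complete $k$-uniform hypergraph with the transversal lower bound $\tau({\cal K}_n^k)=n-k+1$ for sharpness, and the same counting argument (each edge covers at most one vertex outside a transversal) for part $(iii)$.
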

\begin{proof}
Let $H$ be a hypergraph and $e\in E(H)$ an edge of minimum cardinality $r$. If $v$ is an arbitrary vertex from $e$, the $r-1$ vertices in $e\setminus \{v\}$ does not contain any edges of $H$. Hence, $T=(V(H)\setminus e) \cup \{v\}$ is a transversal and $T \cup \{e\}$ is a TC-set in $H$. Then, we have $tc(H) \le |T|+1 = n-(r-1)+1=n-r+2$ that proves $(i)$. From $(i)$, the statement $(ii)$ can be obtained as a direct consequence. Observe further that every complete $k$-uniform hypergraph gives a sharp example as every transversal of ${\cal K}_n^k$ contains at least $n-k+1$ vertices and, if it is not the entire vertex set, we also have to put an edge into the TC-set. Thus, $tc({\cal K}_n^k)= n-k+2$ holds for every $n \ge k \ge 2$.

Now, let $H$ be an arbitrary graph and let $T$ be its transversal set. Since every edge of $H$ intersects $V(H)\setminus T$ in at most one vertex, we cannot cover $V(H)\setminus T$ with less than $|V(H)\setminus T|=n-|T|$ edges. This gives $tc(H)\ge n$ and it is clear, or concluded from part $(ii)$, that $tc(H) \le n$. This proves $(iii)$.
\end{proof}

Concerning the extremal cases in part $(ii)$ of Proposition~\ref{prop:tc}, we prove the following.

\begin{thm}
	\label{thm:TC-extr}
 A $k$-uniform hypergraph $H$ of order $n$ satisfies $tc(H)=n-k+2$, if and only if, for every $\ell \le k-1$ and for every $\ell$  edges $e_{i_1}, \dots, e_{i_\ell}$ of $H$, the union $L=\bigcup_{j=1}^\ell e_{i_j}$ contains at most $\ell+k-2$ weakly independent vertices.
\end{thm}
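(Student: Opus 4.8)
The plan is to recast $tc(H)$ in terms of coverings of weakly independent sets, reduce the claimed equivalence to a self-contained combinatorial statement, and prove that statement by induction on a covering parameter.

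First I would record when $tc(H)$ drops below $n-k+2$. Any TC-set $S$ splits as $S=T\cup\mathcal{E}$ with $T:=S\cap V(H)$ a transversal and $\mathcal{E}:=S\cap E(H)$ a family of edges covering $W:=V(H)\setminus T$; here $W$ is weakly independent because $T$ is a transversal, and conversely $V(H)\setminus W$ is a transversal for every weakly independent $W$. Since $|S|=|T|+|\mathcal{E}|=n-|W|+|\mathcal{E}|$, it follows that $tc(H)\le n-k+1$ holds if and only if there exist a weakly independent set $W$ and a family $\mathcal{E}$ of edges with $W\subseteq\bigcup_{e\in\mathcal{E}}e$ and $|W|\ge |\mathcal{E}|+k-1$; call such a pair \emph{bad}. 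As $tc(H)\le n-k+2$ always by Proposition~\ref{prop:tc}$(ii)$, the theorem is equivalent to the assertion that no bad pair exists if and only if, for every $\ell\le k-1$ and every $\ell$ edges with union $L$, every weakly independent subset of $L$ has size at most $\ell+k-2$ (call this property $(\star)$).

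One implication is immediate: if $(\star)$ fails, witnessed by $\ell\le k-1$ edges whose union $L$ contains a weakly independent set $W$ with $|W|\ge \ell+k-1$, then those $\ell$ edges together with $W$ form a bad pair. For the converse I would assume $(\star)$ and show that every weakly independent set $W$ that can be covered by edges satisfies $\rho^\ast(W)\ge |W|-k+2$, where $\rho^\ast(W)$ denotes the least size of a family of edges covering $W$; this forces $|W|\le |\mathcal{E}|+k-2$ for every covering family $\mathcal{E}$, i.e.\ there is no bad pair. I would argue by induction on $\rho^\ast(W)$. If $\rho^\ast(W)\le k-1$, a minimum covering family has union $L\supseteq W$, so $(\star)$ gives $|W|\le \rho^\ast(W)+k-2$ at once. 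If $\rho^\ast(W)=m\ge k$, fix a minimum covering family $\{e_1,\dots,e_m\}$ and set $P_i=(e_i\cap W)\setminus\bigcup_{j\ne i}e_j$, the vertices of $W$ that $e_i$ covers privately; minimality forces each $P_i$ nonempty. If $|P_i|=1$ for some $i$, say $i=m$, then $W\setminus P_m$ is still covered by $\{e_1,\dots,e_{m-1}\}$, so $\rho^\ast(W\setminus P_m)\le m-1$, and the induction hypothesis gives $|W|-1-k+2\le \rho^\ast(W\setminus P_m)\le m-1$, i.e.\ $|W|\le m+k-2$, as desired. Otherwise every $|P_i|\ge 2$, and then $P_1,\dots,P_{k-1}$ are pairwise disjoint subsets of $W$ lying in $L:=e_1\cup\cdots\cup e_{k-1}$, so $W\cap L$ is a weakly independent subset of $L$ with $|W\cap L|\ge 2(k-1)>(k-1)+k-2$, contradicting $(\star)$ with $\ell=k-1$. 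Assembling the pieces, $(\star)$ rules out bad pairs and hence forces $tc(H)=n-k+2$, while its failure yields a bad pair and hence $tc(H)\le n-k+1$.

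The main obstacle is precisely the case $\rho^\ast(W)\ge k$, and the argument hinges on the dichotomy there: either some edge of a minimum covering family covers a single vertex of $W$ privately, so deleting it removes exactly one vertex and the induction closes cleanly, or every edge covers at least two such vertices, in which case $k-1$ edges of the family already accumulate $2(k-1)$ distinct weakly independent vertices in their union, one beyond what $(\star)$ permits. The remaining points, namely that $W\setminus P_m$ is still covered by the surviving edges and that $W\cap L$, being a subset of the weakly independent set $W$, is weakly independent, are routine.
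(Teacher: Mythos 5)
Your proof is correct and follows essentially the same route as the paper: both reduce the statement to its contrapositive form (a family of edges covering ``too many'' weakly independent vertices) and both analyse the private parts $P_i=(e_i\cap W)\setminus\bigcup_{j\ne i}e_j$ of the edges in the cover. The only cosmetic difference is that you run an induction on the minimum cover number and use the dichotomy ``some $|P_i|=1$ versus all $|P_i|\ge 2$'' (the latter contradicting $(\star)$ on $k-1$ edges directly), whereas the paper iteratively prunes an edge with at most one private vertex found by a pigeonhole count; the combinatorial core is identical.
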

\begin{proof}
	We prove the equivalence for the  negations.
	First suppose that there exist $\ell$ edges, say $e_{1}, \dots, e_{\ell}$, in $H$ such that $L=\bigcup_{j=1}^\ell e_{j}$ contains $s=\ell+k-1$ (weakly) independent vertices. Let $X=\{v_{1}, \dots, v_{s}\}$ be such an independent subset of $L$. Then, the set $T=V(H) \setminus X$ is a transversal of cardinality $n-s$ and the remaining vertices in $X$ can be covered by using the $\ell$ edges $e_{1}, \dots, e_{\ell}$. This TC-set contains $n-s+\ell=n-(\ell+k-1)+\ell=n-k+1$ elements and, as follows, we have $tc(H) \le n-k+1 <n-k+2$. This proves the first direction of the equivalence.
	
	Now we assume that $tc(H) \le n-k+1$ and  show the existence of $\ell \le k-1$ edges such that the union $L$ contains more than $\ell+k -2$ independent vertices. Consider a TC-set $S$ with $|S|=n-k+1$ and, renaming the edges and vertices if necessary, let $S\cap E(H)=\{ e_{1}, \dots, e_{\ell}\}$ and $X=V(H) \setminus (S \cap V(H))=\{v_{1}, \dots, v_{s}\}$ where $s=n-|S\cap V(H)|=n-(n-k+1-\ell)=k+\ell-1$. By the definition of a TC-set, $ X$ is an independent set in $H$ and all vertices in $X$ can be covered by the $\ell$ edges $e_{1}, \dots, e_{\ell}$. Therefore, the union $L$ of these $\ell$  edges contains a set $X$ of $s> \ell+k-2$ independent vertices. 
	
	Now it suffices to prove that we can ensure $\ell \le k-1$. Suppose that $\ell \ge k$ and the union $L$ of $e_{1}, \dots, e_{\ell}$ contains a set $X$ of $s=k+\ell -1$ independent vertices. Then, $\ell = s-k+1 \ge s-\ell +1$ holds and we may derive that $2\ell -1 \ge s$. By the pigeonhole principle, there exists an edge $e_q$, where $q \in [\ell]$, which contains at most one `private'\ vertex from $X$ that is, $$|(e_q \setminus \bigcup_{j \in [\ell], \: j\neq q} e_j ) \cap X| \le 1.$$
	Removing this edge $e_q$ from the list, we have $\ell -1$ edges such that their union contains at least $(\ell-1)+k-1$ independent vertices. If $\ell-1$ is still greater than $k-1$, we may repeat the procedure. At the end, we have $\ell' \le k-1$ edges such that their union contains $k + \ell'-1$ independent vertices. Consequently, it is enough to check the property for at most $k-1$ edges. 
		\end{proof}
	
	Motivated by Theorem~\ref{thm:TC-extr}, we define the following classes of hypergraphs.
	\begin{definition}
	For every $k\ge 3$, a $k$-uniform hypergraph $H$ belongs to the class $\cH_k$, if and only if, $\rho(H) \le k-1$ and $\alpha_w(H) \ge \rho(H)+k-1$.
	\end{definition}
	The condition  $\rho(H) \le k-1$ implies that for each $k$ and every $H \in \cH_k$, the order of $H$ is at most $k(k-1)$ and, consequently,  $\cH_k$ is a finite set of hypergraphs. Having the definition of $\cH_k$ in hand, we can formulate the following corollary.
	
	\begin{corollary}
		\label{cor:forbid-F}
	For each $k \ge 3$,  a $k$-uniform hypergraph $F$ satisfies $tc(F)=|V(F)|-k+2$ if and only if $F$ is $\cH_k$-free.
	\end{corollary}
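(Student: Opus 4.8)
The plan is to deduce Corollary~\ref{cor:forbid-F} directly from Theorem~\ref{thm:TC-extr} by translating the condition appearing there into the language of $\rho$ and $\alpha_w$. Theorem~\ref{thm:TC-extr} says that $tc(F)=|V(F)|-k+2$ fails exactly when there are $\ell\le k-1$ edges $e_{i_1},\dots,e_{i_\ell}$ of $F$ whose union $L$ carries at least $\ell+k-1$ weakly independent vertices. So it suffices to show that such a collection of edges exists if and only if $F$ has an induced subhypergraph belonging to $\cH_k$.

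First I would prove the ``only if'' direction. Suppose such edges $e_{i_1},\dots,e_{i_\ell}$ exist with $\ell\le k-1$, and let $L=\bigcup_{j=1}^\ell e_{i_j}$ contain a weakly independent set of size at least $\ell+k-1$. I may assume the collection is chosen with $\ell$ minimum; then (by the same pigeonhole argument already used in the proof of Theorem~\ref{thm:TC-extr}, or simply because dropping a redundant edge only shrinks things) none of these $\ell$ edges is redundant for covering $L$, so $\rho(F[L])=\ell$. Here I would note that $\rho(F[L])\le\ell$ trivially, and $\rho(F[L])\ge \ell$ because if fewer edges covered $L$ we could restart with a smaller collection still satisfying the independence bound — this needs a short argument that shrinking $\ell$ keeps the bound $\alpha_w\ge \rho+k-1$ intact, which is exactly the monotonicity packaged in Theorem~\ref{thm:TC-extr}. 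Set $H=F[L]$. Then $H$ is $k$-uniform (induced subhypergraph of a $k$-uniform hypergraph), $\rho(H)=\ell\le k-1$, and $\alpha_w(H)\ge \ell+k-1 = \rho(H)+k-1$. Hence $H\in\cH_k$, so $F$ is not $\cH_k$-free.

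Conversely, suppose $F$ is not $\cH_k$-free, so there is $V'\subseteq V(F)$ with $H=F[V']\in\cH_k$; that is, $\rho(H)\le k-1$ and $\alpha_w(H)\ge\rho(H)+k-1$. Pick a minimum edge cover of $H$, say $\ell=\rho(H)\le k-1$ edges $e_{i_1},\dots,e_{i_\ell}$; these are edges of $F$ contained in $V'$, and their union $L$ is contained in $V'$. Also pick a maximum weakly independent set $W$ of $H$ with $|W|=\alpha_w(H)\ge \ell+k-1$. Since $W\subseteq V'$ and $H$ is the \emph{induced} subhypergraph on $V'$, the set $W$ is also weakly independent in $F$. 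Moreover every vertex of $V'$ lies in one of the covering edges, so $W\subseteq L$, giving $\ell\le k-1$ edges whose union contains at least $\ell+k-1$ weakly independent vertices. By Theorem~\ref{thm:TC-extr}, $tc(F)\ne|V(F)|-k+2$. Combining the two directions proves the corollary.

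The main obstacle is the bookkeeping in the ``only if'' direction: one must be careful that the edges witnessing the failure of the $tc$-equality really do form a minimum edge cover of the induced subhypergraph on their union, i.e.\ that one can arrange $\rho(F[L])=\ell$ rather than merely $\rho(F[L])\le\ell$. This is precisely the ``it suffices to prove $\ell\le k-1$'' reduction already carried out inside the proof of Theorem~\ref{thm:TC-extr}, so I would either invoke that argument verbatim or, more cleanly, just restate Theorem~\ref{thm:TC-extr} in the equivalent minimized form ``$\ell$ edges forming a minimum edge cover of their union'' and cite it. Everything else is a routine unwinding of definitions, using only that $F[V']$ is an \emph{induced} subhypergraph so that weak independence and edge membership transfer between $F[V']$ and $F$.
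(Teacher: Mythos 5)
Your proof is correct and is exactly the definitional unwinding the paper intends (the paper states this corollary as an immediate consequence of Theorem~\ref{thm:TC-extr} and the definition of $\cH_k$, with no written proof). One remark: the ``main obstacle'' you identify in the only-if direction is not actually there --- you do not need to arrange $\rho(F[L])=\ell$, since $\rho(F[L])\le \ell$ already yields $\alpha_w(F[L])\ge \ell+k-1\ge \rho(F[L])+k-1$, which is all that membership in $\cH_k$ requires, so the minimality argument can be dropped entirely.
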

\medskip

    The next theorem and its consequences give a characterization for connected bipartite $(\gamma, \gamma_k)$-graphs. In fact, these characterizations refer to the properties of the underlying hypergraph but, since we know that every bipartite $(\gamma, \gamma_k)$-graph $G$ belongs to a class $\cB_k(F)$, these results also characterize the structure of $G$.
    
    \begin{thm}
    	\label{thm:char-gamma-k}
   For each $k \ge 3$, a connected bipartite graph $G$ satisfies $\gamma_k(G)=\gamma(G)+k-2$, if and only if, $G \in \cB_k$ and the underlying hypergraph $F$ of $G$ satisfies $tc(F)=|V(F)|-k+2$. 
    \end{thm}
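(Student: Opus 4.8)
The plan is to prove the equivalence in two directions, using Theorem~\ref{thmPartite} to reduce to the structure of $\cB_k$ and Theorem~\ref{thm:simpl} to reduce further to the $\gamma_k$-simplified graph $G^* \in \cB_k^*(F)$, the double incidence graph of $F$. Thus it suffices to show that $G^*$ is a $(\gamma, \gamma_k)$-graph if and only if $tc(F) = |V(F)| - k + 2$. For this, the key is to compute both $\gamma_k(G^*)$ and $\gamma(G^*)$ in terms of $F$. For $\gamma_k(G^*)$: since every vertex of $X$ has degree exactly $k$, and each vertex $v_\ell \in D = V(F)$ needs either to be in the $k$-dominating set or to have $k$ neighbours in it, a counting argument (essentially the one already used in the proof of Theorem~\ref{thm:simpl}) gives $\gamma_k(G^*) = |D| = |V(F)|$, with $D$ itself a minimum $k$-dominating set. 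So the equality $\gamma_k(G^*) = \gamma(G^*) + k - 2$ becomes $\gamma(G^*) = |V(F)| - k + 2$.

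The heart of the argument is therefore to show $\gamma(G^*) = tc(F)$. First I would show $\gamma(G^*) \le tc(F)$: given a TC-set $S$ of $F$ with $S \cap V(F)$ a transversal and the edges of $S \cap E(F)$ covering the rest, take the vertex set $(S \cap V(F)) \cup \{x_i^1 : e_i \in S \cap E(F)\}$ in $G^*$. A transversal of $F$ dominates all of $X$ (every $X_i$ meets any hyperedge, hence meets the transversal's corresponding neighbour), it dominates all of $D$ (a transversal vertex $v_\ell$ covers itself; a vertex $v_m \notin S \cap V(F)$ lies in some edge $e_i \in S \cap E(F)$, so $x_i^1$ dominates it), and clearly each chosen $x_i^1$ is dominated. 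So this set of size $|S|$ dominates $G^*$. For the reverse inequality $\gamma(G^*) \ge tc(F)$, I would take a minimum dominating set $Q$ of $G^*$ with $|Q \cap D|$ maximum; as in Theorem~\ref{thm:simpl}, one may assume $|Q \cap X_i| \le 1$ for each $i$ and that $Q \cap X_i \subseteq \{x_i^1\}$. Then $Q \cap D$ must be a transversal of $F$: if some edge $e_i$ had all its $D$-vertices outside $Q$, the common neighbour $x_i^2$ would need a dominator, but $x_i^2$'s only neighbours are the vertices of $e_i$, contradiction. Finally, for every vertex $v \in D \setminus Q$, $v$ must be dominated by some $x_i^j \in Q$ with $v \in e_i$, so the edges $\{e_i : x_i^1 \in Q \text{ (or } x_i^2)\}$ cover $D \setminus (Q\cap D)$; this makes $(Q \cap D) \cup \{e_i : Q \cap X_i \ne \emptyset\}$ a TC-set of $F$ of size $|Q|$, giving $tc(F) \le \gamma(G^*)$.

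Combining, $\gamma(G^*) = tc(F)$ and $\gamma_k(G^*) = |V(F)|$, so $G^*$ (hence $G$) is a $(\gamma,\gamma_k)$-graph iff $tc(F) = |V(F)| - k + 2$. I would also note the "only if" direction needs the observation from Theorem~\ref{thmPartite} that a connected bipartite $(\gamma, \gamma_k)$-graph with $\Delta(G) \ge k$ indeed lies in $\cB_k$, and that by Corollary~\ref{lem:2} the underlying hypergraph is well-defined; and I should record that $tc(F) \le |V(F)| - k + 2$ always holds by Proposition~\ref{prop:tc}$(ii)$ (since $F$ has an edge, which in turn follows from $\Delta(G) \ge k$), so the equality is really the extremal case. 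The main obstacle I anticipate is the bookkeeping in the $\gamma(G^*) \ge tc(F)$ direction: carefully arguing that one can push a minimum dominating set into $D \cup X$ in the canonical form (no two vertices of the same $X_i$, only $x_i^1$ used, no "wasted" dominators), and that the resulting edge-and-vertex selection is genuinely a TC-set — i.e. that the transversal property is not broken by the reduction moves. Much of this machinery is already present in the proof of Theorem~\ref{thm:simpl}, so I would cite or adapt it rather than redo it from scratch.
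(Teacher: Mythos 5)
Your proposal is correct and follows essentially the same route as the paper's proof: reduce via Theorems~\ref{thmPartite} and~\ref{thm:simpl} to the $\gamma_k$-simplified graph $G^*$, note $\gamma_k(G^*)=|V(F)|$, and establish $\gamma(G^*)=tc(F)$ by the same two constructions (a TC-set yields a dominating set of equal size, and a normalized minimum dominating set with $Q\cap X_i\subseteq\{x_i^1\}$ yields a TC-set of equal size). No gaps.
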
 
\begin{proof}
If $G$ is a connected bipartite $(\gamma,\gamma_k)$-graph then, by Theorem~\ref{thmPartite}, we have $G \in \cB_k$. Further, by Theorem~\ref{thm:simpl}, $G$ is a $(\gamma, \gamma_k)$-graph if and only if  its $\gamma_k$-simplified graph $G^*$ is a $(\gamma,\gamma_k)$-graph as well. Note that $G$ and  $G^*$ admit the same underlying hypergraph $F$ where $|V(F)|= \gamma_k(G)= \gamma_k(G^*)$. Hence, it suffices to prove that $\gamma(G^*)=tc(F)$.

Consider now a minimum dominating set $Q$ of $G^*$ such that $|Q\cap V(F)|$ is maximum under this condition. First, observe that $Q$ does not contain two false twins from $V(G)\setminus V(F)$. Indeed,  $x_i^1 \in Q$ dominates itself and all vertices from the associated edge $e_i$ of $F$ and then, $x_i^2$ can be replaced by any vertex of $e_i$ in the dominating set. We may suppose, without loss of generality, that $x_i^2 \notin Q$ holds for each $e_i \in E(F)$. Since $N_{G^*}(x_i^2)=e_i$ and $|N_{G^*}(x_i^2) \cap Q| \ge 1$, we infer that $Q$ contains at least one vertex from each $e_i \in E(F)$. Thus, $Q\cap V(F)$ is a transversal of $F$. Further, if a vertex $v_j \in V(F)$ does not belong to $Q$, it is dominated by a vertex $x_s^1\in Q$ where the edge $e_s$ of the underlying hypergraph is incident with $v_j$. It means that the edge set
$$R=\{e_s: e_s \in E(F)    \mbox{ and } x_s^1 \in Q\}$$
covers all vertices of $V(F)$ which are outside $Q$. Then, $(Q \cap V(F))\cup R$ is a TC-set in $F$ and since $|R|$ equals the number of vertices in $Q \setminus V(F)$, the cardinality of this TC-set is $|Q|= \gamma(G^*)$. Therefore, $tc(F) \le |Q| =\gamma(G^*)$.
\medskip

To prove the other direction, suppose that $S'= T'\cup R'$ is a minimum TC-set of $F$ where $T'= S' \cap V(F)$ and $R'=S' \cap E(F)$. Now, determine the set $Q^* \subseteq V(G^*)$ as $Q^*=T' \cup R^*$ where $R^*$ consists of those vertices $x_i^1$ which represent the edges $e_i \in  R'$ that is,
$$R^*=\{x_i^1: N_{G^*}(x_i^1) \in R'\}.$$

Hence, we have $|Q^*|= |S'|= tc(F)$. Observe that, by the definition of the underlying hypergraph, the transversal $T'$ of $F$ dominates all vertices in $V(G) \setminus V(F)$ in the graph $G^*$. Moreover, each vertex $u$ of $V(F)$ which is outside $T'$ is covered by an edge $e_i \in R'$ in the underlying hypergraph and, therefore, $u$ is dominated by the vertex $x_i^1 \in R^*$ in $G$. We conclude that $Q^*$ is a dominating set of $G^*$ and we have $\gamma(G^*) \le |Q^*| =tc(F)$. This finishes the proof of the theorem.
\end{proof}

Theorem~\ref{thm:char-gamma-k}, Theorem~\ref{thm:TC-extr}, and Corollary~\ref{cor:forbid-F} immediately imply  other formulations of the characterization. 

\begin{corollary}
	\label{cor:TC-extr}
For each $k \ge 3$, a connected bipartite graph $G$ is a $(\gamma, \gamma_k)$-graph if and only if $G \in \cB_k$ and the underlying hypergraph $F$ of $G$ satisfies the following property: for every $\ell \le k-1$ and for every $\ell$  edges $e_{i_1}, \dots, e_{i_\ell}$ of $F$, the union $L=\bigcup_{j=1}^\ell e_{i_j}$ contains at most $\ell+k-2$ weakly independent vertices.
\end{corollary}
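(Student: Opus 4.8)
The plan is to combine the three previously established results in a straightforward chain of equivalences. Corollary~\ref{cor:TC-extr} is really just Theorem~\ref{thm:char-gamma-k} with the condition ``$tc(F)=|V(F)|-k+2$'' rewritten using Theorem~\ref{thm:TC-extr}. So the first step is simply to invoke Theorem~\ref{thm:char-gamma-k}: a connected bipartite graph $G$ is a $(\gamma,\gamma_k)$-graph if and only if $G\in\cB_k$ and its underlying hypergraph $F$ satisfies $tc(F)=|V(F)|-k+2$. Here one should recall that every connected bipartite $(\gamma,\gamma_k)$-graph indeed has a well-defined underlying $k$-uniform hypergraph, which follows from Theorem~\ref{thmPartite} together with Corollary~\ref{lem:2}, so the statement ``$G\in\cB_k$ and $F$ satisfies $\dots$'' is meaningful in both directions.

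The second step is to apply Theorem~\ref{thm:TC-extr} to the $k$-uniform hypergraph $F$ (note $F$ is $k$-uniform by construction, and $k\ge 3$ as required). That theorem states precisely that a $k$-uniform hypergraph $H$ of order $n$ satisfies $tc(H)=n-k+2$ if and only if, for every $\ell\le k-1$ and every choice of $\ell$ edges $e_{i_1},\dots,e_{i_\ell}$, the union $L=\bigcup_{j=1}^\ell e_{i_j}$ contains at most $\ell+k-2$ weakly independent vertices. Substituting $H=F$ and $n=|V(F)|$, the condition $tc(F)=|V(F)|-k+2$ in Theorem~\ref{thm:char-gamma-k} is equivalent to the union condition stated in the corollary. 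Chaining the two equivalences gives exactly the claimed statement.

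Since this is a direct corollary, there is essentially no obstacle: the only thing to be careful about is making sure the hypotheses of the two cited results line up (connectedness and bipartiteness of $G$, $k\ge 3$, $k$-uniformity of $F$, and the existence of the underlying hypergraph), and that we are quoting Theorem~\ref{thm:TC-extr} with its order parameter correctly identified as $|V(F)|$. One could also mention, for completeness, that Corollary~\ref{cor:forbid-F} gives yet another equivalent phrasing via $\cH_k$-freeness of $F$, but that is a separate corollary and not needed for the proof of this one. Thus the write-up is just: ``By Theorem~\ref{thm:char-gamma-k}, $G$ is a $(\gamma,\gamma_k)$-graph iff $G\in\cB_k$ and $tc(F)=|V(F)|-k+2$; by Theorem~\ref{thm:TC-extr}, the latter equality is equivalent to the displayed union condition; combining the two yields the corollary.''
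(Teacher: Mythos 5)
Your proposal is correct and matches the paper exactly: the paper derives this corollary immediately by combining Theorem~\ref{thm:char-gamma-k} with Theorem~\ref{thm:TC-extr}, which is precisely the two-step chain of equivalences you describe.
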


%$\cH_k$ was introduced as the collection of $k$-uniform hypergraphs....
\begin{corollary}
	\label{cor:forbid-gamma-k}
	For each $k \ge 3$, a connected bipartite graph $G$ is a $(\gamma, \gamma_k)$-graph if and only if $G \in \cB_k$ and the underlying hypergraph of $G$ is $\cH_k$-free.
\end{corollary}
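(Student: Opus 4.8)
The plan is to obtain Corollary~\ref{cor:forbid-gamma-k} by simply chaining together equivalences that are already in hand, so the argument is a two-step substitution rather than a fresh proof. First I would invoke Theorem~\ref{thm:char-gamma-k}: for $k \ge 3$, a connected bipartite graph $G$ is a $(\gamma,\gamma_k)$-graph if and only if $G \in \cB_k$ and its underlying hypergraph $F$ satisfies $tc(F)=|V(F)|-k+2$. This reduces the whole task to replacing the numerical condition $tc(F)=|V(F)|-k+2$ by the forbidden-subhypergraph condition ``$F$ is $\cH_k$-free''.

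Second, I would observe that the object being referred to as the underlying hypergraph is legitimately a $k$-uniform hypergraph: by Definition~\ref{Def:B_k*} the class $\cB_k$ is precisely the union of the classes $\cB_k(F)$ taken over all $k$-uniform hypergraphs $F$ (and Corollary~\ref{lem:2} guarantees this hypergraph exists for every connected $(\gamma,\gamma_k)$-graph), so $F$ is $k$-uniform and Corollary~\ref{cor:forbid-F} applies to it without modification. That corollary states exactly that a $k$-uniform hypergraph $F$ satisfies $tc(F)=|V(F)|-k+2$ if and only if $F$ is $\cH_k$-free. Plugging this equivalence into the statement of Theorem~\ref{thm:char-gamma-k} yields precisely the claimed characterization: $G$ is a $(\gamma,\gamma_k)$-graph iff $G\in\cB_k$ and the underlying hypergraph of $G$ is $\cH_k$-free.

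There is essentially no obstacle here; the only point that merits a moment's care is confirming that ``the underlying hypergraph of $G$'' in Theorem~\ref{thm:char-gamma-k} and ``a $k$-uniform hypergraph $F$'' in Corollary~\ref{cor:forbid-F} are the same, $k$-uniform, object, which is immediate from the definitions. (Should one prefer a purely combinatorial reformulation, one can instead unwind Corollary~\ref{cor:forbid-F} through Theorem~\ref{thm:TC-extr}, recovering Corollary~\ref{cor:TC-extr}; but for the present statement the substitution above is all that is needed.)
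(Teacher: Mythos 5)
Your proposal is correct and matches the paper exactly: the paper derives Corollary~\ref{cor:forbid-gamma-k} as an immediate consequence of Theorem~\ref{thm:char-gamma-k} together with Corollary~\ref{cor:forbid-F}, which is precisely the substitution you carry out. No further comment is needed.
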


For the case of $k=3$, Corollary~\ref{cor:TC-extr} can be reformulated as follows.
\begin{corollary}
	\label{cor:2-2Prop}
A connected bipartite graph $G$ is a $(\gamma, \gamma_3)$-graph if and only if $G \in \cB_3$ and the underlying hypergraph $F$ satisfies the following property:
\begin{itemize}
\item[$(\star)$] Every four different vertices that can be split into two pairs of adjacent vertices induce at least one hyperedge in $F$.
\end{itemize}
	\end{corollary}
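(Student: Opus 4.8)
The plan is to read off Corollary~\ref{cor:2-2Prop} from Corollary~\ref{cor:TC-extr} specialized to $k=3$, so that the only real work is to translate the hypergraph condition occurring there into property $(\star)$. When $k=3$ the quantifier ``$\ell\le k-1$'' ranges over $\ell\in\{1,2\}$. I would first observe that the instance $\ell=1$ is vacuous: a single hyperedge $e$ of a $3$-uniform hypergraph has exactly three vertices, and $e$ itself is a hyperedge contained in $e$, so no weakly independent subset of $e$ has more than $2=\ell+k-2$ vertices. Hence, for a connected bipartite graph $G\in\cB_3$, being a $(\gamma,\gamma_3)$-graph is equivalent to the single requirement that the underlying hypergraph $F$ satisfies: \emph{for every two edges $e_1,e_2\in E(F)$, the union $e_1\cup e_2$ contains at most $3$ weakly independent vertices}. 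Since a subset of $V(F)$ is weakly independent exactly when it contains no hyperedge, and since $F$ is $3$-uniform, this is in turn equivalent to: \emph{every $4$-element subset $W\subseteq e_1\cup e_2$ contains a hyperedge of $F$}, i.e.\ $F[W]$ has an edge.

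For the direction ``this two-edge property implies $(\star)$'', I would take four distinct vertices $a,b,c,d$ that split into two adjacent pairs, say $\{a,b\}$ and $\{c,d\}$. By the definition of adjacency in a hypergraph there are edges $e_1\supseteq\{a,b\}$ and $e_2\supseteq\{c,d\}$, and necessarily $e_1\ne e_2$ because each of them has only three vertices. Thus $\{a,b,c,d\}$ is a $4$-element subset of $e_1\cup e_2$, so it contains a hyperedge, which is precisely the conclusion of $(\star)$.

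For the converse I would fix two edges $e_1,e_2$ and a $4$-element subset $W\subseteq e_1\cup e_2$, and assign to each vertex of $W$ one of $e_1,e_2$ containing it. Since $|e_1|=|e_2|=3$, the sizes of the two resulting parts can only be $(3,1)$, $(1,3)$, or $(2,2)$. In the first (respectively second) case the three vertices assigned to $e_1$ (respectively $e_2$) already exhaust that edge, so $W$ itself contains the hyperedge $e_1$ (respectively $e_2$). In the remaining case $W$ is the disjoint union of a pair contained in $e_1$ and a pair contained in $e_2$; both pairs are adjacent, and since these four vertices are distinct, property $(\star)$ provides a hyperedge contained in $W$. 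In all cases $F[W]$ contains an edge, which is the required two-edge property, so the two conditions are equivalent and Corollary~\ref{cor:2-2Prop} follows.

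I do not expect a genuine obstacle here, since the whole argument is a short unwinding of the definitions of adjacency, weakly independent set, and $3$-uniformity on top of Corollary~\ref{cor:TC-extr}. The only place that calls for a little care is the last step: one must check that the degenerate $(3,1)$ and $(1,3)$ splittings force $W$ to contain $e_1$ or $e_2$ outright, and that in the $(2,2)$ splitting the two pairs are genuinely disjoint, hence consist of four distinct vertices, so that $(\star)$ is indeed applicable.
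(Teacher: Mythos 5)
Your proof is correct and follows the same route as the paper, which presents this corollary as a direct reformulation of Corollary~\ref{cor:TC-extr} for $k=3$ without spelling out the translation; your write-up simply supplies that unwinding (the vacuity of $\ell=1$, and the $(3,1)/(2,2)$ case split showing that the two-edge condition is equivalent to $(\star)$), and the details check out.
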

 \begin{prop}
 If $G$ is a connected bipartite $(\gamma, \gamma_3)$-graph and $D$ is a minimum $3$-dominating set of $G$, then  $G^2[D]$ is a threshold graph. 
\end{prop}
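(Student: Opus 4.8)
The plan is to reduce the claim to the characterization of bipartite $(\gamma,\gamma_3)$-graphs obtained in Corollary~\ref{cor:2-2Prop}. By Theorem~\ref{thmPartite}, the minimum $3$-dominating set $D$ is one of the two partite classes of $G$, so $D$ is independent in $G$ and, for distinct $u,v\in D$, the pair $uv$ is an edge of $G^2$ precisely when $u$ and $v$ have a common neighbour in $V(G)\setminus D$. By Corollary~\ref{lem:2} and Theorem~\ref{thmPartite}, $G$ lies in $\cB_3(F)$ for a $3$-uniform underlying hypergraph $F$ with $V(F)=D$; and because $G$ is a $(\gamma,\gamma_3)$-graph, Corollary~\ref{cor:2-2Prop} guarantees that $F$ satisfies property $(\star)$. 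Recalling the classical fact that a graph is a threshold graph if and only if it has no induced $2K_2$, $P_4$, or $C_4$, it is enough to exclude these three graphs as induced subgraphs of $G^2[D]$.

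First I would verify that $G^2[D]$ is exactly the $2$-section graph of $F$. For one inclusion, let $u,v\in D$ be distinct with a common neighbour $w\in V(G)\setminus D=X\cup Y$; by Definition~\ref{Def:B_k*}, $N_G(w)$ is either a hyperedge $e_i$ of $F$ (when $w$ is $x_i^1$, $x_i^2$, or a false twin of $x_i^1$) or a set $S$ with $|S|\ge 3$ all of whose $3$-element subsets are hyperedges of $F$ (when $w\in Y$), and in both cases $u$ and $v$ belong to a common hyperedge of $F$, hence are adjacent in $F$. For the reverse inclusion, if $u$ and $v$ lie in a common hyperedge $e_i$ of $F$, then the vertex $x_i^1$ of the double incidence graph is a common neighbour of $u$ and $v$ in $G$, so $uv\in E(G^2[D])$. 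Thus the edges of $G^2[D]$ are precisely the adjacent pairs of $F$.

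It then remains to forbid the three obstructions. Each of $2K_2$, $P_4$, and $C_4$ is triangle-free and admits a perfect matching, so suppose $\{a,b,c,d\}\subseteq D$ induces in $G^2[D]$ a copy of one of them in which $ab$ and $cd$ are the two matching edges. Since $ab$ and $cd$ are edges of $G^2[D]$, the pairs $\{a,b\}$ and $\{c,d\}$ are adjacent in $F$, so property $(\star)$ supplies a hyperedge $T\subseteq\{a,b,c,d\}$ of $F$; by the previous paragraph the three vertices of $T$ are pairwise adjacent in $G^2[D]$ and hence induce a triangle there, contradicting the triangle-freeness of the graph induced on $\{a,b,c,d\}$. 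Therefore $G^2[D]$ is $\{2K_2,P_4,C_4\}$-free and so a threshold graph. I expect the only slightly delicate point to be the correct use of Definition~\ref{Def:B_k*} for the vertices of $Y$: one needs that such a vertex is adjacent to at least three vertices of $D$ whose triples are all hyperedges, so that any two of its neighbours lie together in some hyperedge; beyond this the argument is routine.
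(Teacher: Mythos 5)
Your proposal is correct and follows essentially the same route as the paper's proof: identify $G^2[D]$ with the $2$-section graph of the underlying hypergraph $F$, invoke property $(\star)$ from Corollary~\ref{cor:2-2Prop} to produce a triangle inside any four vertices spanning two disjoint edges, and conclude via the $(2K_2,P_4,C_4)$-free characterization of threshold graphs. The only difference is that you spell out the verification that $G^2[D]$ equals the $2$-section graph (including the case of vertices in $Y$), which the paper dispatches with a brief remark.
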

 \begin{proof}
 Suppose that $G$ and $D$ satisfies the conditions of the proposition and consider the underlying hypergraph $F$ with $V(F)=D$. Since $G \in \cB_3$, any two vertices $u,v\in D$ having distance $2$ in $G$ belong to a common hyperedge of $F$. In other words, if $uv \in E(G^2[D])$, then $uv$ is an edge in the $2$-section graph $H$ of $F$. Similar argumentation shows that the other direction is valid, too. Thus, $H=G^2[D]$. 
By Corollary~\ref{cor:2-2Prop}, the underlying hypergraph $F$ satisfies $(\star)$. This implies that  for every two vertex disjoint edges $vv'$ and $uu'$ of $H$, the vertex set $\{v,v',u,u'\}$ contains at least one hyperedge of $F$. We infer that there exist three vertices in $\{v,v',u,u'\}$ which induces a triangle in $H$. Checking all possible extensions of a $2K_2$ on four vertices, we obtain that $2K_2$, $P_4$ and $C_4$ are the forbidden induced subgraphs. Since  $(2K_2, P_4, C_4)$-free graphs are exactly the threshold graphs, the statement follows.
 \end{proof}

\section{Characterization of Bipartite $( \gamma, \gamma_3 )$-perfect Graphs} \label{sec:4}

In this section, we prove a characterization for bipartite $( \gamma, \gamma_3 )$-perfect graphs. First, we define the graph classes $\mathcal{S}_k$ and prove that all members of $\mathcal{S}_k$ are $(\gamma, \gamma_k)$-perfect  for every $k\ge 2$. Then, the main theorem of this section states that $\mathcal{S}_3$ is the set of all bipartite $(\gamma, \gamma_3)$-perfect graphs.

\begin{definition}
	\label{def:Sk}
  Let $k, r, i_1, \dots, i_r$ be integers which satisfy  $k \ge 2$, $r \ge 1$, and $i_j \ge k$ for every $j \in [r]$. We define $ S_k(i_1, \dots, i_r) $ as the graph that is obtained from the star $K_{1,r}$ in the following way. First replace the edges $e_1, \dots, e_r$ of the star with $i_1, \dots, i_r$ parallel edges, respectively, and subdivide each edge exactly once. Finally, supplement the graph by $k-2$ new vertices which are false twins with the center. For a fixed integer $k \ge 2$, let $ \mathcal{S}_k $ be the graph class that contains  all  $S_k(i_1, \dots, i_r) $ with $r \ge 1$ and $i_j \ge k$ for every $j \in [r]$.
\end{definition}
With this notation, $S_2(\ell)$ corresponds to $K_{2, \ell}$ whenever $\ell \ge 2$. In general, $S_k (\ell)$ is just $K_{k, \ell}$ if $\ell \ge k \ge 2$.  For another example that is not a complete bipartite graph see Fig.~\ref{fig:S3}.

	\begin{figure}[t]
\centerline{\includegraphics[width=0.5\textwidth]{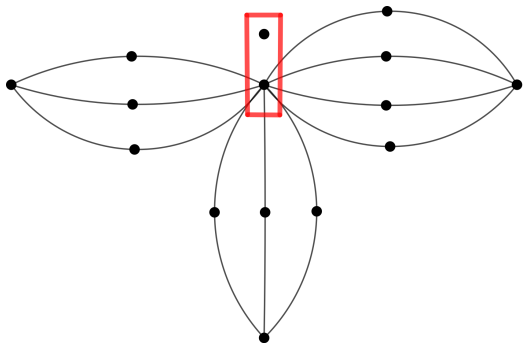}} 	%\vspace*{8pt}
	\captionsetup{width=0.8\textwidth}
	\centering
	\caption{\protect An illustration for the graph $ S_3(3,3,4) $. The thick red box represents two false twin vertices.}
	\label{fig:S3}
\end{figure}

\begin{prop}
	\label{prop:Sk}
	If $ G \in \mathcal{S}_k $, then $ G $ is ($ \gamma,\gamma_k $)-perfect.
\end{prop}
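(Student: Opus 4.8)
The plan is to make the structure of $G=S_k(i_1,\dots,i_r)$ completely explicit, describe exactly which connected induced subgraphs of $G$ have minimum degree at least $k$, and then compute $\gamma$ and $\gamma_k$ for each of them by elementary counting. Let $C$ denote the centre of the star together with its $k-2$ false twins, so $|C|=k-1$ and $C$ is an independent set. For $j\in[r]$ let $S_j$ be the set of the $i_j$ subdivision vertices created on the $j$-th multiplied edge; each $s\in S_j$ is adjacent precisely to $C\cup\{\ell_j\}$, so $\deg_G(s)=k$, and the leaf $\ell_j$ is adjacent precisely to $S_j$, so $\deg_G(\ell_j)=i_j\ge k$, while each $c\in C$ is adjacent to $S_1\cup\dots\cup S_r$, so $\deg_G(c)=\sum_j i_j\ge k$. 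In particular $G$ is connected, bipartite with parts $C\cup\{\ell_1,\dots,\ell_r\}$ and $S_1\cup\dots\cup S_r$, and $\delta(G)\ge k$.

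First I would pin down the connected induced subgraphs $H=G[W]$ with $\delta(H)\ge k$. If $s\in S_j$ lies in $W$, its degree in $G$ is already $k$, so all of $N_G(s)=C\cup\{\ell_j\}$ must lie in $W$; if $\ell_j\in W$, it needs $k$ of its neighbours in $W$, hence $|W\cap S_j|\ge k$ and in particular $W$ meets $S_j$; and $W\subseteq C$ is impossible since $C$ is independent. Consequently $C\subseteq W$, the set $J:=\{j:W\cap S_j\ne\emptyset\}$ is nonempty and equals $\{j:\ell_j\in W\}$, and $|W\cap S_j|\ge k$ for every $j\in J$. Conversely every such choice works, so the connected induced subgraphs with $\delta(H)\ge k$ are exactly the graphs $H$ with $V(H)=C\cup\{\ell_j:j\in J\}\cup\bigcup_{j\in J}T_j$ for some nonempty $J\subseteq[r]$ and some $T_j\subseteq S_j$ with $|T_j|\ge k$; they are automatically connected (everything reaches $C$, and $C$ reaches a fixed $T_{j_0}$). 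Note that $G$ itself is of this form, with $J=[r]$ and $T_j=S_j$.

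It then remains to prove $\gamma_k(H)=\gamma(H)+k-2$ for every such $H$; set $p=|J|$. The set $D_0=C\cup\{\ell_j:j\in J\}$ is $k$-dominating, since every remaining vertex lies in some $T_j$ and has all $k$ of its neighbours $C\cup\{\ell_j\}$ inside $D_0$, so $\gamma_k(H)\le p+k-1$; for the reverse inequality, given a $k$-dominating set $D$ split $J$ into $J_2=\{j:T_j\subseteq D\}$ and $J_1=J\setminus J_2$, and note that if $J_1\ne\emptyset$ then some $s\in T_j\setminus D$ forces $C\cup\{\ell_j\}\subseteq D$, whence the pairwise disjoint sets $C$, $\{\ell_j:j\in J_1\}$ and $\bigcup_{j\in J_2}T_j$ all lie in $D$ and give $|D|\ge (k-1)+|J_1|+k|J_2|\ge (k-1)+p$, while if $J_1=\emptyset$ then $|D|\ge\sum_{j\in J}|T_j|\ge kp\ge (k-1)+p$ because $(p-1)(k-1)\ge 0$; hence $\gamma_k(H)=p+k-1$. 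For $\gamma$, fixing $j_0\in J$ and $s\in T_{j_0}$, the set $\{s\}\cup\{\ell_j:j\in J\}$ dominates $H$ ($s$ covers $C$, and each $\ell_j$ covers $T_j$), so $\gamma(H)\le p+1$; and since the $p$ pairwise disjoint sets $\{\ell_j\}\cup T_j$ each meet every dominating set we get $\gamma(H)\ge p$, with equality impossible because a dominating set $D$ of size $p$ would be disjoint from $C$, forcing it to contain some subdivision vertex $s\in T_{j_1}$, and then $\ell_{j_1}\notin D$ while the $\ge k-1\ge 1$ further vertices of $T_{j_1}$ have all their neighbours in the set $C\cup\{\ell_{j_1}\}$ that misses $D$ --- a contradiction. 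Therefore $\gamma(H)=p+1$ and $\gamma_k(H)-\gamma(H)=k-2$. Applying this to $H=G$ as well, $G$ is $(\gamma,\gamma_k)$-perfect.

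The step I expect to be the most delicate is the lower bound $\gamma(H)\ge p+1$, specifically the observation that a dominating set of size exactly $p$ cannot cover both the independent centre-set $C$ and the ``private'' subdivision vertices sitting in the same branch as whichever subdivision vertex it is forced to use; the second most delicate point is making the induced-subgraph characterisation airtight, i.e.\ that $\delta(H)\ge k$ forces $C\subseteq W$ and forces each retained leaf to drag in at least $k$ of its subdivision vertices. As an alternative for $k\ge 3$ one could instead invoke Corollary~\ref{cor:TC-extr}: each such $H$ lies in $\cB_k(F)$, where $F$ is the $k$-uniform hypergraph on $C\cup\{\ell_j:j\in J\}$ whose $p$ edges are the sets $C\cup\{\ell_j\}$, and the union of any $\ell\le k-1$ of these edges contains at most $\ell+k-2$ weakly independent vertices (a largest weakly independent subset being that union with one centre vertex removed), so the condition of Corollary~\ref{cor:TC-extr} holds; but the direct computation above has the advantage of also settling the case $k=2$.
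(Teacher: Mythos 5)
Your proof is correct and follows essentially the same route as the paper: characterize the induced subgraphs of minimum degree at least $k$ (they are exactly the graphs $S_k(i'_1,\dots,i'_{r'})$ with all $i'_j\ge k$, i.e.\ they retain the whole centre-set, a nonempty set of leaves, and at least $k$ subdivision vertices per retained leaf) and then verify $\gamma_k=p+k-1$ and $\gamma=p+1$ for each. The only difference is that you write out in full the counting arguments for $\gamma$ and $\gamma_k$ that the paper dismisses as straightforward, and your closing remark about invoking Corollary~\ref{cor:TC-extr} mirrors the paper's own aside about Theorem~\ref{thm:char-gamma-k}.
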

\begin{proof}
	Consider a graph $G=S_k(i_1,\dots, i_r)$ from the graph class $\mathcal{S}_k$.  It is straightforward to check that the minimum vertex degree is $k$ and that $\gamma_k(G)= r+k-1$ and $\gamma(G)=r+1$ hold. Alternatively, the equality $\gamma_k(G)=\gamma(G)+k-2$ can be verified directly by Theorem~\ref{thm:char-gamma-k}. Thus, $G$ is a $(\gamma, \gamma_k)$-graph. Note that every subdivision vertex has degree $k$. Now, consider an induced subgraph $H$ of $G$ that satisfies  $\delta(H) \ge k$. Suppose that a vertex $u \in V(G)$ does not belong to $V(H)$. If $u$ is the center or its false twin  then, since $\delta(H) \ge k$,   no subdivision vertices can be present in $H$. This contradicts the degree condition.  If $u$ is a leaf in the original star then,  by the degree condition again, no neighboring subdivision vertices belong to $H$. If $u$ is a subdivision vertex, then consider the neighbor of $u$ which was a leaf in the star, say $u'$, and denote the degree $d_G(u')$ by $i_j$. When $u$ does not belong to $H$, we have two cases: either $u'$ and at least $k$ of its neighbors are still present in $H$; or  the entire $N_G[u']$ is omitted from $H$. We infer that every induced subgraph $H$ of $G$ with $\delta(H) \ge k$ is isomorphic to a graph $S_k(i_1',\dots , i_{r'}')$ where  $i_j' \ge k$ for all $j' \in [r']$. Therefore, $H$ is a $(\gamma, \gamma_k)$-graph again. This proves the $(\gamma, \gamma_k)$-perfectness of $G$.  
\end{proof}

As proved in \cite{manu}, ${\cal S}_2$ is the set of all  $(\gamma, \gamma_2)$-perfect graphs. Since each ${\cal S}_k$ contains only bipartite graphs, it is equivalent to saying that ${\cal S}_2$ is the set of all bipartite $(\gamma, \gamma_2)$-perfect graphs. Here we  prove an analogous statement for ${\cal S}_3$.

\begin{thm} \label{thm: perfect-3}
	$ G $ is a bipartite $( \gamma,\gamma_3 )$-perfect graph if and only if $ G \in \mathcal{S}_3 $.
\end{thm}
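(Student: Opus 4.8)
The plan is to prove both directions, the ``if'' part being already settled by Proposition~\ref{prop:Sk}, so the real work is the ``only if'' direction: every bipartite $(\gamma,\gamma_3)$-perfect graph $G$ is isomorphic to some $S_3(i_1,\dots,i_r)$. Since $G$ is in particular a bipartite $(\gamma,\gamma_3)$-graph with $\delta(G)\ge 3$, Theorem~\ref{thmPartite} gives $G\in\cB_3$ with underlying hypergraph $F$ on a minimum $3$-dominating set $D$ equal to one partite class, and by Corollary~\ref{cor:2-2Prop}, $F$ satisfies the property~$(\star)$: every four vertices splittable into two adjacent pairs induce a hyperedge of $F$. The first step is to extract structural consequences of hereditary perfectness: for every connected induced subgraph $H$ with $\delta(H)\ge 3$, $H$ is again a $(\gamma,\gamma_3)$-graph, hence (if $H$ is bipartite, which it is) $H$ lies in $\cB_3$ and its underlying hypergraph satisfies $(\star)$ as well. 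I would use this to rule out configurations in $F$ and in $G$ that would create a ``bad'' induced subhypergraph surviving the $\delta\ge 3$ restriction.

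The core of the argument is to show that $F$ must be a disjoint union of complete $3$-uniform hypergraphs on $\ge 3$ vertices, i.e.\ the $2$-section graph $G^2[D]$ is a disjoint union of cliques each of size a multiple of $3$ plus the $k-2=1$ false-twin structure --- wait, more precisely, I would show $F$ consists of vertex-disjoint copies of ${\cal K}_{i_j}^3$. To do this: first, using the Proposition just before Section~4, $G^2[D]$ is a threshold graph; combined with $G$ bipartite and $F$ satisfying $(\star)$, and with the hereditary property applied to induced subgraphs obtained by deleting the vertices of $Y$ and trimming false twins, I would argue that $F$ cannot be connected unless it is complete (otherwise a non-complete connected induced subhypergraph with few edges violates $(\star)$ or produces a forbidden $\cH_3$-member after restricting to a $\delta\ge 3$ subgraph). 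Then each component of $F$ being ${\cal K}_n^3$ forces, via the $\cB_3$ construction and the false-twin/supplementation rules in Definition~\ref{Def:B_k*}, that $G$ restricted to one component together with its private neighbourhood is exactly a blow-up of a subdivided multi-edge star, i.e.\ an $S_3$-block; the $k-2=1$ extra false twin of the center in $S_3$ corresponds precisely to the single vertex one may add adjacent to a complete subhypergraph $S=V(F)$. Putting the components together and checking edge-count parity ($i_j\ge 3$, each component of $F$ has order $i_j$) yields $G\cong S_3(i_1,\dots,i_r)$.

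The step I expect to be the main obstacle is proving that $F$ must be a disjoint union of complete $3$-uniform hypergraphs --- equivalently, excluding ``mixed'' or sparse underlying hypergraphs that happen to satisfy $(\star)$ but are not unions of cliques. Property $(\star)$ alone is not strong enough (there are $(\star)$-hypergraphs that are not unions of ${\cal K}^3$); the extra leverage must come from hereditary perfectness, i.e.\ from the requirement that $(\star)$ and membership in $\cB_3$ survive under passing to every connected induced subgraph with $\delta\ge 3$. Concretely I would suppose $F$ has a component that is not complete, take two vertices $u,v$ in it with $uv\notin E(G^2[D])$ (they exist since the component is not a clique in the $2$-section), trace a shortest path/hyperpath between them, and build a small connected induced subgraph of $G$ --- keeping $u$, $v$, enough of the hyperedges along the path, and the degree-$3$ subdivision-type vertices $X_i$ needed to maintain $\delta\ge 3$ --- whose underlying hypergraph violates $(\star)$, contradicting Corollary~\ref{cor:2-2Prop} applied to that subgraph. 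Managing the bipartiteness and the $\delta\ge 3$ bookkeeping while doing this surgery (so that the witnessing subgraph is genuinely induced, connected, bipartite, and has minimum degree $\ge 3$) is the delicate part; everything after ``$F$ is a disjoint union of ${\cal K}^3_{i_j}$'s'' is routine translation through Definition~\ref{Def:B_k*} and Definition~\ref{def:Sk}.
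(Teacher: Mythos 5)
There is a genuine gap, and it lies exactly where you place the ``core of the argument'': the structural target you aim for is wrong. The underlying hypergraph of $S_3(i_1,\dots,i_r)$ is \emph{not} a disjoint union of complete $3$-uniform hypergraphs. Writing $c$ for the center, $c'$ for its false twin and $u_1,\dots,u_r$ for the leaves of the original star, the minimum $3$-dominating set is $D=\{c,c',u_1,\dots,u_r\}$ and the hyperedges are $e_j=\{c,c',u_j\}$ for $j\in[r]$; for $r\ge 2$ this is a connected \emph{sunflower} with two-element core $\{c,c'\}$ and pairwise distinct petal vertices, not a ${\cal K}_n^3$ (e.g.\ $\{c,u_1,u_2\}$ is not a hyperedge), and its $2$-section is a complete split graph, not a disjoint union of cliques. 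Conversely, ${\cal K}_n^3$ for $n\ge 4$ cannot occur as the underlying hypergraph of a $(\gamma,\gamma_3)$-perfect graph. Moreover $F$ must be connected (since $G$ is connected and $D$ is a partite class), so ``disjoint union'' cannot arise at all. Consequently, even if you succeeded in proving the statement you set out to prove, it would not yield $G\cong S_3(i_1,\dots,i_r)$; and in fact you cannot prove it, because it is false for every $S_3(i_1,\dots,i_r)$ with $r\ge 2$. The parameter $i_j$ is the number of common neighbours of the $j$-th hyperedge (the multiplicity of the subdivided edge), not the order of a component of $F$, and the false twin of the center lies in $D$ itself, not among the supplementary $Y$-vertices, so the concluding ``translation'' step is also misaligned with Definition~\ref{Def:B_k*}.

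What the paper actually establishes, by exhibiting explicit connected induced subgraphs $H$ with $\delta(H)\ge 3$ and computing $\gamma_3(H)\ge\gamma(H)+2$ directly (not merely via property $(\star)$ of Corollary~\ref{cor:2-2Prop}), is: (A) every hyperedge of $F$ contains a vertex of degree $1$ in $F$ (this already kills ${\cal K}_n^3$ for $n\ge4$ and all vertices of $V(G)\setminus D$ of degree $\ge 4$); and (D) no two hyperedges of $F$ share exactly one vertex. Combined with connectivity of $F$, any two hyperedges must share exactly two vertices, and (A) forces those two shared vertices to be the same pair for all hyperedges, each hyperedge carrying a private degree-one vertex. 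That sunflower structure, together with the fact that each hyperedge has at least three common neighbours, is what reconstructs $S_3(i_1,\dots,i_r)$. Your proposal would need to be redirected toward proving these two claims (or equivalents) rather than completeness of components; the deletion-surgery idea you describe is in the right spirit for Claim~A, but the contradiction there comes from a degree-counting lower bound $\gamma_3(H)\ge |D|$ versus $\gamma(H)\le |D|-2$, not from a violation of $(\star)$ in a subhypergraph.
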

\begin{proof} 
	By Definition~\ref{def:Sk} and Proposition~\ref{prop:Sk}, every member of  $\mathcal{S}_3$ is  $(\gamma, \gamma_3)$-perfect and bipartite. To prove the other direction, suppose that $G$ is a bipartite $( \gamma,\gamma_3 )$-perfect graph, $D$ is a minimum $3$-dominating set and $F$ is the $3$-uniform underlying hypergraph of $G$ (with respect to $D$). Under these conditions, we first prove a couple of claims on the structure of $F$ and $G$.
	
 \begin{unnumbered}{Claim~A.}
 		Every edge of $F$ contains a vertex of degree $1$.
 \end{unnumbered}
\noindent
\textit{Proof of Claim A.} \enskip Suppose, to the contrary, that $e_1=\{u,u',u''\}$ is an edge in $F$ such that $d_F(u)\ge d_F(u')\ge d_F(u'')\ge 2$.	Let $Y_4$ be the set of vertices of degree at least $4$ in $V(G)\setminus D$ and let $Z= (N_G(u) \cap N_G(u') \cap N_G(u'')) \setminus \{x_1^1\}$. 

Consider the induced subgraph $H=G[V(G)\setminus (Y_4 \cup Z) ]$ of $G$. Since $G$ is bipartite and no vertex from $D$ was deleted, every vertex $y \in  V(H)\setminus D$ remains of degree $3$ in $H$. If $v$ is a vertex from $D$ such that  $d_F(v)=1$, then $v$ cannot be contained in a clique of order at least $4$ in $F$ and, by Corollary~\ref{lem:2}, $v$ cannot be adjacent to any vertex from $Y_4$.
By our assumption, $e_1$ does not contain the degree-1 vertex $v$ and therefore, $v$ is adjacent to none of the vertices in $Z$. Thus, $d_F(v)=1$ implies $d_{H}(v)=d_G(v) \ge 3$.  Now, consider a vertex $v \in D$ that satisfies $d_F(v) \ge 2$. This vertex $v$ is incident with at least two different edges in $F$, say $v$ is incident with $e_i$ and $e_j$. Observe that $d_G(v) \ge 4$ as  $v$ is adjacent to all vertices from $X_i\cup X_j$. Since $Y_4 \cup Z$ contains at most one vertex, namely $x_1^2$, from $X_i\cup X_j$, the degree of $v$ is still at least $3$ in $H$. We conclude $\delta(H)=3$. 

It is straightforward to show that, under our assumptions, $(D\cup \{x_1^1\}) \setminus \{u, u', u''\}  $ is a dominating set of $H$ and hence we have $\gamma(H) \le |D|-2$. We now prove that $\gamma_3(H)\ge |D|$ that  gives the desired contradiction. Consider an arbitrary $3$-dominating set $Q$ in $H$. Denote $|D\setminus Q|$ by $s$ and $|Q\cap (V(H) \setminus D)|$ by $\ell$. To $3$-dominate all vertices in $D\setminus Q$, we need at least $3s$ edges between the vertex sets $D\setminus Q$ and $Q\cap (V(H) \setminus D)$. On the other hand, we may have at most $3\ell$ edges between the two sets because every vertex in the second set is of degree $3$. Consequently, $\ell \ge s$ holds and this implies 
$$|Q|=|Q\cap D|+ |Q\cap (V(H) \setminus D)|= |D|-s+\ell \ge |D|$$
for every $3$-dominating set $Q$ of $H$. Therefore, we have $\gamma_3(H)\ge |D| >\gamma(H) +1$ for an induced subgraph of minimum degree $3$ that contradicts the $(\gamma, \gamma_3)$-perfectness of $G$. This contradiction finishes the proof of Claim A.

\medskip
\begin{unnumbered}{Claim~B.}
	Every vertex from $V(G) \setminus D$ is of degree $3$ in $G$.  
\end{unnumbered}
\noindent
\textit{Proof of Claim B.} \enskip By Claim A, there is no complete subhypergraph in $F$ on four (or more) vertices. Together with Corollary~\ref{lem:2} these imply that there exist no vertices of degree greater than $3$ in $V(G)\setminus D$. Since $\delta(G) \ge 3$, this results in $d_G(y) = 3$ for every $y \in V(G)\setminus D$ as stated.

\medskip
\begin{unnumbered}{Claim~C.}
 If three vertices form an edge in the underlying hypergraph $F$, then they have at least three common neighbors in $G$.
\end{unnumbered}
\noindent
\textit{Proof of Claim~C.} \enskip By Claim~A, every edge $e=\{u,u',u''\}$ of $F$ contains a vertex of degree 1. We may assume that $u$ is such a vertex in $e$. By the definition of the underlying hypergraph and by Claim B, all neighbors of the degree-1 vertex $u$ are associated with the edge $e$ that is, 
$$N_G(u)= \{y \in V(G)\setminus D: N_G(y)=\{u,u',u''\} \}.$$
Since $|N_G(u)|=d_G(u) \ge 3$, there exist at least three common neighbors of $u$, $u'$, and $u''$.

\medskip
\begin{unnumbered}{Claim~D.}
	$F$ does not have two edges that share exactly one vertex.
\end{unnumbered}
\noindent
\textit{Proof of Claim~D.} \enskip
First suppose that two edges, namely $e_i$ and $e_j$, of $F$ share exactly one vertex $w$. Let  $e_i=\{u,u',w\}$ and $e_j = \{v,v', w\}$.  By Claim~C  $u$, $u'$ and $w$ have at least three common neighbors, denote them by $x_i^1$, $x_i^2$ and $x_i^3$. Similarly, let the common neighbors of $v$, $v'$ and $v''$ be $x_j^1$, $x_j^2$ and $x_j^3$. Consider the subgraph $H$ of $G$ induced by these $11$ vertices. Observe that $H$ has minimum degree $3$,  $\gamma_3({H})=5$ and, as $w$, $x_i^1$, $x_j^1$ form a dominating set,  $\gamma(H) \le 3$. Thus, we have $\gamma_3(H) > \gamma(H)+1$ for a connected induced subgraph of $G$ with minimum degree 3. This contradicts our condition on the $(\gamma, \gamma_3)$-perfectness of $G$. Consequently, $F$ cannot contain two edges intersecting in exactly one vertex.

\medskip

Concerning the statement and proof of Claim~D, remark that even if $e_i \cup e_j$ contains further edges in $F$, the considered subgraph $H$ is an induced subgraph of $G$. 

\medskip

To complete the proof of Theorem~\ref{thm: perfect-3} we make the following observations. If $F$ contains only one edge, then $G$ is a $K_{3,\ell}$ that is isomorphic to $S_3(\ell)$ for an integer $\ell \ge 3$. So, we may assume that $|D|\ge 4$ and $F$ contains at least two edges. Since $G$ is a connected bipartite graph and $D$ is one of the partite classes, the underlying hypergraph $F$ must be connected as well. Thus, $F$ has two intersecting edges $e_i$ and $e_j$. By Claim~D, $e_i$ and $e_j$ share two vertices, say $v_0^1$ and $v_0^2$.  Claim~A implies that the third vertex of $e_i$ is of degree $1$ and the same is true for $e_j$. The connectivity of $F$ then requires that each edge of $F$ is incident with  $v_0^1$ and $v_0^2$ and contains a further private vertex. Therefore, the underlying hypergraph $F$ of a   $(\gamma,\gamma_3)$-perfect $G$ can always be obtained (up to isomorphism) in the following way:
$$V(F)=\{v_0^1, v_0^2, v_1, \dots , v_r\};$$
$$E(F)=\{e_1, \dots , e_r\} \quad \mbox{where} \quad e_i=\{v_0^1, v_0^2, v_i\} \enskip \mbox{for every $i \in [r]$.}$$
Then, $G$ can be constructed by assigning at least three, say $i_j$, new vertices to each edge $e_j$ of $F$  and making adjacent the vertices in $e_j$ to the new associated vertices. This clearly results in the graph $S_3(i_1,\dots, i_r) $ with $i_j\ge 3$ for all $j \in [r] $. Note that the condition $\delta(G)\ge 3$ implies that $F$ is not edgeless and consequently, $r \ge 1$ must hold. We conclude that every $(\gamma, \gamma_3)$-perfect graph belongs to $\mathcal{S}_3$.
\end{proof}	
\medskip

We show that the statement analogous to Theorem~\ref{thm: perfect-3} holds neither for $k=4$ nor for any even integer $k>4$. First, consider the lexicographic product $C_6 \circ \overline{K}_{2}$ that is, every vertex of the cycle $C_6$ is replaced with $2$ independent vertices. Then, $\gamma_4(C_6 \circ \overline{K}_{2})=6=\gamma(C_6 \circ \overline{K}_{2})+2$ and, since the graph is $4$-regular, every proper subgraph has a vertex of degree smaller than $4$. Thus, $C_6 \circ \overline{K}_{2}$ is $(\gamma, \gamma_4)$-perfect and bipartite but does not belong to $\mathcal{S}_4$. As an infinite class of similar examples we propose the following construction.

\vspace{0.3cm}\noindent\textbf{Example.} For each even integer $k=2\ell \ge 4$ consider the bipartite graph $G_k \in G_k^*$ obtained as the double incidence graph of the following underlying hypergraph $F_k$. The vertex set of $F_k$ is $V=W \cup U$ where $W=\{w_1, \dots, w_\ell\}$ and $U=\{u_1, \dots, u_{\ell+1}\}$; the edge set is $E=\{e_1, \dots , e_{\ell+1}\}$ where $e_i = V \setminus \{u_i\}$ for all $i\in [\ell+1]$. One can check that $F_k$ satisfies the condition given in Corollary~\ref{cor:TC-extr} and, therefore, $G_k$ is a $(\gamma, \gamma_k)$-graph. On the other hand, one can check that every proper subgraph of $G_k$ has minimum degree less than $k$. We may conclude that $G_k$ is a bipartite $(\gamma, \gamma_k)$-perfect graph but does not belong to $\mathcal{S}_k$. 

\section{Complexity Results} \label{sec:5}

\begin{prop}
	Let $k$ be a fixed integer with $k \ge 3$ and let $G $ be a bipartite graph with $\Delta(G) \ge k$. It can be decided in polynomial time whether the graph $ G $ satisfies the equality $ \gamma_k(G) = \gamma(G) +k-2$.	
\end{prop}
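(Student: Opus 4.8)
The plan is to turn the question into checking the structural characterisation of bipartite $(\gamma,\gamma_k)$-graphs obtained in Section~\ref{sec:3}, exploiting that for a \emph{fixed} $k$ every ingredient of that characterisation is ``local'' of bounded size. First I would reduce to the connected case. If $G$ is disconnected, then, as noted at the beginning of Section~\ref{sec:3}, the equality $\gamma_k(G)=\gamma(G)+k-2$ holds if and only if exactly one component of $G$ is a $(\gamma,\gamma_k)$-graph and every other component is a single vertex; computing the components and checking this is polynomial, and since $\Delta(G)\ge k$ forces that one component to have maximum degree at least $k$, the problem reduces to deciding whether a single connected bipartite graph with $\Delta\ge k$ is a $(\gamma,\gamma_k)$-graph. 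For such a $G$ I would compute its unique bipartition $A\cup B$.

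Now, by Theorem~\ref{thmPartite}, a connected bipartite $(\gamma,\gamma_k)$-graph belongs to $\cB_k$ with its underlying hypergraph $F$ built on a partite class, while Corollary~\ref{cor:forbid-gamma-k} says that, conversely, membership in $\cB_k(F)$ together with $\cH_k$-freeness of $F$ is sufficient. Hence $G$ satisfies the equality if and only if for at least one choice $D\in\{A,B\}$ there is a connected $k$-uniform hypergraph $F$ with $V(F)=D$ such that $G\in\cB_k(F)$ and $F$ is $\cH_k$-free. I would treat the (at most) two choices of $D$ separately. The crucial observation is that, once $D$ is fixed, such an $F$ is \emph{uniquely determined}: in any $G\in\cB_k(F)$ with $V(F)=D$ the neighbourhood of a vertex of $V(G)\setminus D$ is either an edge of $F$ (an $X$-vertex) or a set inducing a complete subhypergraph (a $Y$-vertex), so a $k$-subset $e\subseteq D$ is an edge of $F$ precisely when $e\subseteq N_G(v)$ for some $v\in V(G)\setminus D$. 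I therefore set $V(F):=D$ and $E(F):=\{e\in\binom{D}{k}:\ e\subseteq N_G(v)\ \text{for some}\ v\in V(G)\setminus D\}$; for fixed $k$ this family has $O(n^{k})$ members and is built in polynomial time by listing, for each $v\notin D$, the $k$-subsets of $N_G(v)$.

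It then remains to decide whether $G$ actually lies in $\cB_k(F)$ for this $F$, and whether $F$ is $\cH_k$-free. For the membership test it suffices to verify two polynomial conditions: (a) every vertex of $V(G)\setminus D$ has degree at least $k$ (equivalently, $D$ is a $k$-dominating set of $G$, using bipartiteness); and (b) every edge of $F$ is the exact neighbourhood of at least two vertices of $V(G)\setminus D$. Condition (a), together with the construction of $E(F)$, already forces $F$ to be connected and forces each $N_G(v)$ with $v\notin D$ to induce a complete subhypergraph of $F$; condition (b) supplies, for each edge, the mandatory pair $x_i^1,x_i^2$ of the double incidence graph, after which the remaining vertices of $V(G)\setminus D$ are realised as false twins of some $x_i^1$ or as $Y$-vertices, so $G\in\cB_k(F)$. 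Conversely, if (a) or (b) fails for a given $D$, then no admissible $F$ exists for that $D$. For the $\cH_k$-freeness test I would use that $\cH_k$ is a finite family and every member has order at most $k(k-1)$ (Definition of $\cH_k$ and the remark after it): enumerate all vertex subsets $S\subseteq D$ with $|S|\le k(k-1)$, of which there are $O(n^{k(k-1)})$, and check in constant time whether $F[S]$ is isomorphic to a member of $\cH_k$; alternatively one may test the equivalent condition of Corollary~\ref{cor:TC-extr} directly, since the union of any $\ell\le k-1$ edges spans at most $k(k-1)$ vertices and the relevant weak-independence number is then computed on a bounded hypergraph. We answer ``yes'' if some $D$ passes all checks and ``no'' otherwise; correctness is immediate from Theorem~\ref{thmPartite} and Corollary~\ref{cor:forbid-gamma-k}, and the running time is polynomial in $n=|V(G)|$ for each fixed $k$.

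The part I expect to carry the actual content, as opposed to bookkeeping, is the pair of claims that the hypergraph $F$ is uniquely recoverable from $(G,D)$ and that conditions (a)--(b) precisely characterise $G\in\cB_k(F)$; everything else reduces to counting neighbourhoods and to a bounded-size enumeration. It is worth noting that the exponent of the polynomial bound grows with $k$, which is consistent with the statement (where $k$ is fixed) and, unavoidably, with the NP-hardness of the analogous problem on the larger class $\cG_k$ established later in this section.
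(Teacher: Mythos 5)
Your proposal is correct and follows essentially the same route as the paper: reduce to the connected case, invoke Theorem~\ref{thmPartite} and Corollary~\ref{cor:forbid-gamma-k}, recover the underlying hypergraph $F$ from a candidate partite class $D$, and test $\cH_k$-freeness by bounded-size enumeration. The only difference is that you spell out explicitly how $F$ is uniquely reconstructed and how membership in $\cB_k(F)$ is verified, details the paper's proof compresses into ``this can be checked efficiently.''
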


\begin{proof} If $G$ is a connected bipartite $(\gamma, \gamma_k)$-graph with $\Delta(G) \ge k$, then, by Theorem~\ref{thmPartite}, every minimum $k$-dominating set is a partite class of $G$ and $G \in \cB_k$. This can be checked efficiently and, assuming that the conditions are satisfied, the $k$-uniform underlying hypergraph $F$ can be determined in polynomial time as well. Then, by Corollary~\ref{cor:forbid-gamma-k}, it is enough to decide whether $F$ is $\cH_k$-free. Recall that $\cH_k$ is a finite set of $k$-uniform hypergraphs each of which is of order at most $k(k-1)$. Thus, this step can also be performed in polynomial time. 

Finally, note that if $G$ is disconnected, it satisfies $ \gamma_k(G) = \gamma(G) +k-2$, if and only if, $G$ contains one component which is a $(\gamma,\gamma_k)$-graph and the further components are isolated vertices. Hence, the statement remains true over the class of disconnected bipartite graphs. 
\end{proof}

\begin{thm}
	For each $ k \geq 2 $, it is NP-hard to decide whether the equality $ \gamma_k(G) = \gamma(G)+k-2 $ holds for $G$ over the class  $ \cG_k$.
\end{thm}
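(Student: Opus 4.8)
The plan is to establish NP-hardness by a reduction from a known NP-complete problem about domination or about set systems. The most natural source is the classical result that \textsc{Dominating Set} (equivalently, the \emph{hitting set}/\textsc{Vertex Cover in hypergraphs} problem) is NP-hard, or — in light of the structure we have developed — the fact that it is NP-hard to decide whether a hypergraph satisfies $tc(F)=|V(F)|-k+2$; indeed, by Theorem~\ref{thm:TC-extr} and Corollary~\ref{cor:forbid-F} this amounts to recognizing $\cH_k$-freeness, but $\cH_k$ being a \emph{finite} family is recognizable in polynomial time, so that route is a dead end. Hence I would go back to a genuinely hard problem. A clean choice is to reduce from the NP-complete problem of deciding, for a given graph $G_0$ and integer $m$, whether $\gamma(G_0)\le m$ (or, since we are already working with hypergraphs, from \textsc{Hitting Set}). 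The idea is to take an arbitrary instance and attach to it a controlled gadget so that the resulting graph lies in $\cG_k$ and satisfies $\gamma_k=\gamma+k-2$ if and only if the original instance is a ``yes'' instance. The key leverage is Corollary~\ref{lem:2}: membership in $\cG_k$ is essentially automatic once we arrange a minimum $k$-dominating set $D$ with $\Delta(G[D])\le k-2$, and the extra edges permitted inside $D$ and inside $X\cup Y$ give us enormous freedom to embed an arbitrary graph as $G[D]$.

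Concretely, the first step is: given an instance $(G_0,m)$ of \textsc{Dominating Set} with $G_0$ a graph on vertex set $D$, build $G$ by taking $D$ with its original edges (this becomes $G[D]$), adding, for a suitable collection of $k$-subsets of $D$ (the ``hyperedges'' of an underlying hypergraph $F$ we design so that $F$'s transversals correspond to dominating sets of $G_0$), a pair $X_i=\{x_i^1,x_i^2\}$ of new vertices joined to all $k$ vertices of that subset, possibly padding with false twins and $Y$-vertices, and finally adding arbitrary edges inside $D$ and inside $X\cup Y$ as Definition~\ref{Def:B_k*}(iii) allows, to force $D$ itself to be a minimum $k$-dominating set. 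The second step is to verify $G\in\cG_k(F)$ for the hypergraph $F$ just constructed — this is where the flexibility of clause (iii) is used, and it should be routine once the degrees are checked so that $D$ really is a minimum $k$-dominating set and $\Delta(G[D])\le k-2$. The third step is the heart of the argument: show $\gamma_k(G)=|D|$ and relate $\gamma(G)$ to $\gamma(G_0)$ (or to the minimum transversal of $F$) via an analogue of the TC-number bookkeeping in the proof of Theorem~\ref{thm:char-gamma-k}, so that $\gamma_k(G)=\gamma(G)+k-2$ becomes equivalent to $\gamma(G_0)\le m$ (after scaling $m$ appropriately). The fourth step handles the case $k=2$ separately, since Theorem~\ref{thmPartite} and the bipartite machinery are not available there; here one can lean on the already-cited fact (from \cite{manu}) that the analogous problem for $(\gamma,\gamma_2)$-graphs is NP-hard, or re-run the same gadget with $k=2$ directly.

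The main obstacle I anticipate is controlling $\gamma(G)$ from below: the added arbitrary edges inside $D$ and inside $X\cup Y$, which are needed to realize an arbitrary $G[D]$, can only \emph{decrease} the domination number, so I must design the gadget so that every minimum dominating set can be pushed (by twin-swapping and private-domination arguments, exactly as in the proofs of Theorem~\ref{thm:simpl} and Theorem~\ref{thm:char-gamma-k}) into a canonical form from which a transversal of $F$ — and hence a dominating set of $G_0$ — is read off, with no ``shortcut'' exploiting the extra edges. The cleanest way around this is probably to keep $G[D]$ itself \emph{edgeless} (so $G[D]=\overline{K}_{|D|}$, trivially of maximum degree $0\le k-2$) and instead encode the hard instance entirely in the incidence structure of $F$ — i.e. reduce from \textsc{Hitting Set} directly, choosing $F$ to be the $k$-uniform hypergraph whose transversals are exactly the hitting sets of the given set family (padding non-uniform edges up to size $k$ with fresh ``filler'' vertices that are cheap to cover), and then the only role of clause (iii) is to add edges inside $X\cup Y$ to kill unwanted domination shortcuts through the $X_i$-pairs and fillers. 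With that choice, step three reduces to the computation $\gamma(G)=\tau(F)+(\text{number of filler blocks})$ plus the observation $\gamma_k(G)=|V(F)|$, and the equivalence with ``$\tau(F)\le m'$'' — hence with the \textsc{Hitting Set} instance — falls out.
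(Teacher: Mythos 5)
Your high-level plan is reasonable, and you correctly identify both where the hardness must live (in the extra edges permitted by clause $(iii)$ of Definition~\ref{Def:B_k*}, since for graphs in $\cB_k$ the problem is polynomial) and what the main obstacle is (lower-bounding $\gamma(G)$ on no-instances). But the proposal stops exactly where the proof has to start: no gadget is actually constructed, so none of the claims that carry the argument --- ``$\gamma_k(G)=|V(F)|$'', ``$\gamma(G)=\tau(F)+(\text{number of filler blocks})$'', ``the equality becomes equivalent to $\tau(F)\le m'$'' --- is verified, and at least one of the guiding intuitions is backwards. Adding edges inside $X\cup Y$ can only \emph{create} domination shortcuts (it never increases $\gamma$), so it cannot be used to ``kill unwanted domination shortcuts''; on the contrary, the design problem is to add exactly those edges that let a small certificate (a hitting set, a satisfying assignment) be converted into a dominating set of size at most $\gamma_k(G)-k+1$, while proving that no dominating set that small exists otherwise. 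There is also an unaddressed quantitative constraint: since the Fink--Jacobson bound pins the question to whether $\gamma(G)$ attains the single value $\gamma_k(G)-k+2$, your reduction must arrange the padding so that the Hitting Set threshold $m$ lands exactly at $\gamma_k(G)-k+1$ and must simultaneously guarantee $\gamma(G)\ge \gamma_k(G)-k+2$ for every no-instance; with your formula $\gamma(G)=\tau(F)+c$ this forces $\tau(F)$ to equal exactly $|V(F)|-k+2-c$ on no-instances, an additional property of $F$ you would have to build in and prove.

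For comparison, the paper reduces from $3$-SAT with a fully explicit gadget: each variable contributes a triple $x_i^t,x_i^f,v_i$, each clause contributes a vertex $c_j$ joined (inside $X\cup Y$) to the literals it contains, and $k-1$ near-universal vertices $v_0^1,\dots,v_0^{k-1}$ are shared by all hyperedges of the underlying hypergraph; this makes $\{v_1,\dots,v_{s+1},v_0^1,\dots,v_0^{k-1}\}$ a minimum $k$-dominating set of size $k+s$ and reduces the equality question to whether $\gamma(G)\le s+1$. The case analysis on where a dominating set of size $s+1$ can sit is precisely what replaces your missing lower-bound argument. Until you exhibit a concrete $F$, the concrete extra edges, and that case analysis (or its Hitting Set analogue), the proposal is a plan rather than a proof.
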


\begin{proof}
In order to prove that the corresponding decision problem is NP-hard, we construct a polynomial time reduction from 3-SAT problem, a classical NP-complete problem \cite{Garey1979}. Note that we adopt the approach we used in \cite{manu} to prove NP-hardness. Since the construction we need here is more general, we only present the construction explicitly and give a sketch of the rest of the proof, for the brevity. 

	%Consider a set of Boolean variables $ X = \{x_1, x_2, \dots, x_k\}$ and a truth assignment $ \varphi:X \rightarrow \{t,f\} $ for $ X $. 
	%If $ \varphi(x_i)=t $ holds, then the variable $ x_i $ is called $ true $; else if $ \varphi(x_i)=f$ holds, then $ x_i $ is called $ false $. If $ x_i $ is a variable in $ X $, then $ x_i $ and $ \neg{x_i} $ are literals over $ X $. The literal $ x_i $ is true under $ \varphi $ if and only if the variable $ x_i $ is true under $ \varphi $; the literal $ \neg x_i $ is true if and only if the variable $ x_i $ is false. 
	%A clause over $ X $ is a set of three literals over $ X $, represents the disjunction of those literals and it is satisfied by a truth assignment if and only if at least one of its members is true under that assignment. A collection $ \mathcal{C} $ of clauses over $ X $ is \textit{satisfiable} if and only if there exists some truth assignment for $ X $ that satisfies all the clauses in $ \mathcal{C} $. Such a truth assignment is called a \textit{satisfying truth assignment} for $ \mathcal{C} $. 
	% The $ 3 $-SAT problem is given in the following.
	
%		\medskip
%	\medskip
%	\noindent \textbf{3-SATISFIABILITY (3-SAT) PROBLEM}
%	
%	\begin{itemize}[leftmargin=0.7in]		
%		\item[\textit{Instance:}] A collection $  \mathcal{C}  = \{C_1,C_2, \dots ,C_\ell\} $ of clauses over a finite set $ X $ of variables such that $ |\mathcal{C}_j| = 3 $, for $1 \le j \le \ell\} $.
%		\item[\textit{Question:}] Is there a truth assignment for $ X $ that satisfies all the clauses in $ \mathcal{C} $?
%	\end{itemize}
%	
\begin{figure}[t]
	\centerline{\includegraphics[width=1.2\textwidth]{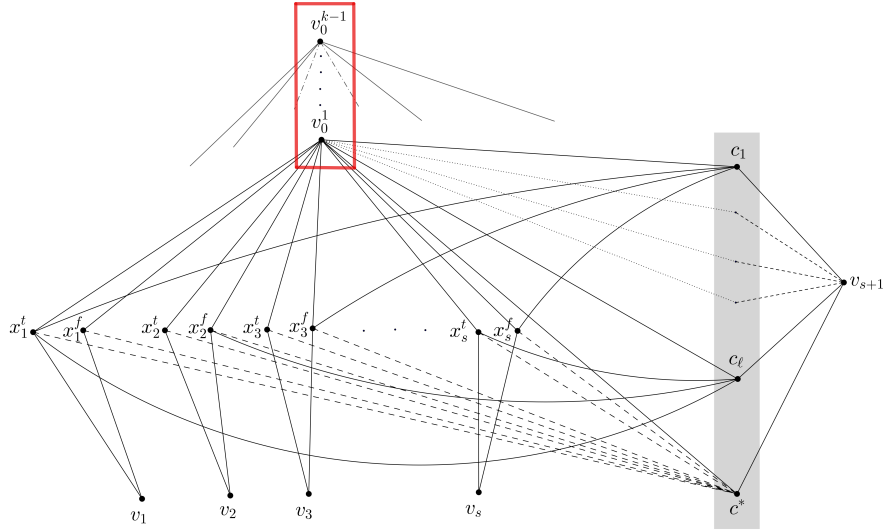}} 	%\vspace*{8pt}
	\captionsetup{width=0.8\textwidth}
	\centering
	\caption{\protect An illustration of the construction for $ 3 $-SAT reduction: The clauses $ C_1 $ and $ C_\ell $ corresponding to the vertices $ c_1 $ and $ c_\ell $, resp., are $ C_1=(x_1 \vee \neg x_3 \vee \neg x_s) $ and  $ C_\ell=(x_1 \vee \neg x_2 \vee x_s) $. The thick red box represents $k-1$ independent vertices with the same open neighborhood.}
	\label{fig:construction}
\end{figure}
Let $ \mathcal{C} $ be a $ 3 $-SAT instance with clauses $ C_1,C_2,\dots, C_\ell $ over the Boolean variables $ X=\{x_1,x_2, \dots, x_s\} $. We may assume that for every three variables $ x_{i_1}, x_{i_2}, x_{i_3} $ there exists a clause $ C_j $, where $ j\in [\ell] $, such that $ C_j $ does not contain any of the variables $ x_{i_1}, x_{i_2}, x_{i_3} $ (neither in positive form, nor in negative form). Otherwise, the problem could be reduced to at most eight (separated) $ 2$-SAT problems, which are solvable in polynomial time.

We construct a graph $ G \in  \cG_k$, such that the given instance $ \mathcal{C} $ of $ 3 $-SAT problem is satisfiable if and only if $ \gamma(G) < \gamma_k(G)-k+2$. 

For every variable $ x_i $, we create three vertices $ \{x_i^t, x_i^f, v_i\} $ and then we add the edges $ x_i^tv_i $ and $ x_i^fv_i $. For every clause $ C_j \in \mathcal{C}$, we create a vertex $ c_j $, and if $ x_i $ is a literal in $ C_j $, then $ x_i^tc_j \in E(G) $; if $ \neg x_i $ is a literal in $ C_j $, then    $ x_i^fc_j \in E(G) $. Moreover, we add a vertex $ c^* $ and the edges $ c^*x_i^t  $ and $  c^*x_i^f $ for every $ i\in [s] $. We also add a vertex $ v_{s+1} $ and the edge set $  \{c_iv_{s+1}:1\le i\le\ell\} \cup \{c^*v_{s+1}\}$.  Finally, we add $ k-1 $ new vertices $ v^1_0, \dots, v^{k-1}_0$, such that each vertex $ v^r_0 $ is adjacent to every vertex in $ V(G)\setminus \{v_1,v_2\dots,v_{s+1}\} $ for $ r \in [k-1] $ (for an illustration of the construction, see Fig.~\ref{fig:construction}). The order of $ G $ is obviously $ 3s+\ell +k+1 $ and this construction can be done in polynomial time. Note that $ G \in \cG(F)$ and any two hyperedges in $ F $ share $ k-1 $ vertices, namely $ v_0^1, \dots, v_0^{k-1} $. It is straightforward to prove that $\{v_1, \dots, v_{s+1}, v_0^1, \dots, v_0^{k-1}\}$ is a minimum $k$-dominating set in $G$ and, therefore, $\gamma_k(G)=k+s$.
%We have $ \gamma_k(G) = k+s $ by \ref{label}.

In order to finish the proof, it suffices to show that $ \mathcal{C}  $ is satisfiable if and only if $ \gamma(G) < \gamma_k(G)-k+2 $. First, assume that $ \mathcal{C} $ is satisfiable and let $ \varphi: X \rightarrow \{t,f\} $ be  the corresponding truth assignment. The set $ D' = D_1 \cup D_2 \cup \{c^*\} $ is a dominating set, where  $D_1=\bigcup_{i\in [s] }\{x_i^t :\varphi(x_i)=t\} $ and  let $D_2 = \bigcup_{i\in [s] }\{x_i^f : \varphi(x_i)=f\}$. Since $ |D'|=s+1 $, we have $ \gamma(G) < \gamma_k(G)-k+2=(k+s)-k+2 $ for $ k\geq 2 $.

For the other direction, assume that $ \gamma(G) < \gamma_k(G)-k+2 $ and consider a dominating set $D'$ such that $ |D'|\leq s+1 $. In order to dominate $ v_i $, the set $ D' $ contains at least one vertex from the set $ \{x_i^t, x_i^f, v_i\} $, for each $ i\in [s] $. Similarly, to dominate $ v_{s+1} $, the set $ D' $ contains at least one vertex from the set $ \{c_1,c_2,\dots,c_\ell,c^*,v_{s+1}\}$. Since $ |D'|\le s+1 $, we have $|D' \cap \{x_i^t, x_i^f, v_i\}| = 1 $ for every $ i\in [s] $. Moreover, $ |D'\cap \{c_1,c_2,\dots,c_\ell,c^*,v_{s+1}\}| = 1$ and $ \{v_0^1, \dots, v_0^{k-1}\} \cap D' = \emptyset $. There are three cases to consider and for a detailed discussion, we refer the reader to \cite{manu}.
\begin{itemize}
	\item[$ (i) $] If $ v_{s+1} \in D' $ then, to dominate $x_i^t$ and $x_i^f$ for each $i \in [s]$, every $v_i$ must be contained in $D'$. Hence,  the vertices $ \{v_0^1, \dots, v_0^{k-1}\} $ are not dominated by a vertex from $ D' $, a contradiction.
	\item[$ (ii) $] If $ c_j \in D' $ for some $ j \in [\ell] $, then for all but at most three $i \in [s]$ we have  $v_i \in D'$. Then, there is a vertex $ c_q $, such that $ q \in [\ell] $ and $ c_q $ is not dominated by a vertex from $ D' $, a contradiction. 
	\item[$(iii) $] If $ c^* \in D' $, then $c_j$ must be dominated by a vertex $x_i^t$ or $x_i^f$ for every $j \in [\ell]$. Now, consider the truth assignment $ \varphi: X \rightarrow \{t,f\} $  where $\varphi(x_i)=t$ if and only if $x_i^t \in D'$ and observe that $\varphi$ satisfies the 3-SAT instance $ \mathcal{C} $.
\end{itemize}
This finishes the proof. 
\end{proof}

%\section{Concluding Remarks and Open Problems}

\section*{Acknowledgements}
  This research was started when the second author visited the Ege University in Izmir, Turkey; the authors thank the financial support of T\" UBİTAK under the grant  BİDEB 2221 (1059B211800686). The second author also acknowledges the financial support from the Slovenian Research Agency under the project N1-0108.

\bibliographystyle{siamnodash}
\bibliography{references}

\begin{thebibliography}{10}

\bibitem{Arumugam2013}
S~Arumugam, Bibin~K Jose, Csilla Bujt{\'a}s, and Zsolt Tuza.
\newblock Equality of domination and transversal numbers in hypergraphs.
\newblock {\em Discrete Applied Mathematics}, 161(13):1859--1867, 2013.

\bibitem{Blidia2006}
Mostafa Blidia, Mustapha Chellali, and Teresa~W Haynes.
\newblock Characterizations of trees with equal paired and double domination
  numbers.
\newblock {\em Discrete Mathematics}, 306(16):1840--1845, 2006.

\bibitem{Bujtas2017}
Csilla Bujt{\'a}s and Szil{\'a}rd Jask{\'o}.
\newblock Bounds on the $ 2 $-domination number.
\newblock {\em Discrete Applied Mathematics}, 242:4--15, 2018.

\bibitem{Caro1990-2}
Yair Caro.
\newblock On the $ k $-domination and $ k $-transversal numbers of graphs and
  hypergraphs.
\newblock {\em Ars Combinatoria}, 29:49--55, 1990.

\bibitem{Caro1990}
Yair Caro and Yehuda Roditty.
\newblock A note on the $ k $-domination number of a graph.
\newblock {\em International Journal of Mathematics and Mathematical Sciences},
  13(1):205--206, 1990.

\bibitem{ChellaliSurvey}
Mustapha Chellali, Odile Favaron, Adriana Hansberg, and Lutz Volkmann.
\newblock $ k $-domination and $ k $-independence in graphs: A survey.
\newblock {\em Graphs and Combinatorics}, 28(1):1--55, 2012.

\bibitem{Dettlaff2016}
Magda Dettlaff, Magdalena Lema{\'n}ska, Gabriel Semani{\v{s}}in, and Rita
  Zuazua.
\newblock Some variations of perfect graphs.
\newblock {\em Discussiones Mathematicae Graph Theory}, 36(3):661--668, 2016.

\bibitem{manu}
G{\"u}lnaz~Boruzanl{\i} Ekinci and Csilla Bujt{\'a}s.
\newblock On the equality of domination number and $2$-domination number.
\newblock {\em arXiv preprint arXiv:1907.07866}, 2019.

\bibitem{Favaron1988}
Odile Favaron.
\newblock $ k $-domination and $ k $-independence in graphs.
\newblock {\em Ars Combinatoria}, 25C:159--167, 1988.

\bibitem{Favaron2008}
Odile Favaron, Adriana Hansberg, and Lutz Volkmann.
\newblock On $ k $-domination and minimum degree in graphs.
\newblock {\em Journal of Graph Theory}, 57(1):33--40, 2008.

\bibitem{Fink85}
J.~F. Fink and M.~S. Jacobson.
\newblock $ n $-domination in graphs.
\newblock {\em Graph Theory with Applications to Algorithms and Computer
  Science}, \:283--300, 1985.

\bibitem{Fink85-2}
J.~F. Fink and M.~S. Jacobson.
\newblock On $ n $-domination, $ n $-dependence and forbidden subgraphs.
\newblock {\em Graph Theory with Applications to Algorithms and Computer
  Science}, \:301--311, 1985.

\bibitem{Garey1979}
Michael~R Garey and David~S Johnson.
\newblock {\em Computers and Intractibility: A Guide to the Theory of
  NP-Completeness}.
\newblock WH Freeman and Co., New York, 1979.

\bibitem{Hansberg2015}
Adriana Hansberg.
\newblock On the $ k $-domination number, the domination number and the cycle
  of length four.
\newblock {\em Utilitas Mathematica}, 98:65--76, 2015.

\bibitem{Hansberg2013}
Adriana Hansberg and Ryan Pepper.
\newblock On $ k $-domination and $ j $-independence in graphs.
\newblock {\em Discrete Applied Mathematics}, 161(10):1472--1480, 2013.

\bibitem{Hansberg2016}
Adriana Hansberg, Bert Randerath, and Lutz Volkmann.
\newblock Claw-free graphs with equal $ 2 $-domination and domination numbers.
\newblock {\em Filomat}, 30(10):2795--2801, 2016.

\bibitem{Hansberg2008}
Adriana Hansberg and Lutz Volkmann.
\newblock On graphs with equal domination and $ 2 $-domination numbers.
\newblock {\em Discrete Mathematics}, 308(11):2277--2281, 2008.

\bibitem{Hartnell1995}
Bert Hartnell and Douglas~F Rall.
\newblock A characterization of graphs in which some minimum dominating set
  covers all the edges.
\newblock {\em Czechoslovak Mathematical Journal}, 45(2):221--230, 1995.

\bibitem{DominationBook2}
Teresa~W Haynes, Stephen Hedetniemi, and Peter Slater.
\newblock {\em Domination in graphs: Advanced Topics}.
\newblock Marcel Dekker, 1997.

\bibitem{DominationBook1}
Teresa~W Haynes, Stephen Hedetniemi, and Peter Slater.
\newblock {\em Fundamentals of domination in graphs}.
\newblock CRC Press, 1998.

\bibitem{Brause2016}
Marcin Krzywkowski, Michael~A Henning, and Christoph Brause.
\newblock A characterization of trees with equal $ 2 $-domination and $ 2
  $-independence numbers.
\newblock {\em Discrete Mathematics \& Theoretical Computer Science}, 19, 2017.

\bibitem{Lingas2018}
Andrzej Lingas, Mateusz Miotk, Jerzy Topp, and Pawe{\l} {\.Z}yli{\'n}ski.
\newblock Graphs with equal domination and covering numbers.
\newblock {\em arXiv preprint arXiv:1802.09051}, 2018.

\bibitem{Randerath1998}
Bert Randerath and Lutz Volkmann.
\newblock Characterization of graphs with equal domination and covering number.
\newblock {\em Discrete Mathematics}, 191(1-3):159--169, 1998.

\bibitem{Shaheen2009}
Ramy~S Shaheen.
\newblock Bounds for the 2-domination number of toroidal grid graphs.
\newblock {\em International Journal of Computer Mathematics}, 86(4):584--588,
  2009.

\end{thebibliography}

\end{document}